\newcommand{\cov}{\mathrm{cov}}
\newcommand{\tr}{\mathrm{Tr}\,}
\newcommand{\OO}{\mathcal O}
\newtheorem{theorem}{Theorem}
\newtheorem{lemma}{Lemma}
\newtheorem{proposition}{Proposition}
\newtheorem{remark}{Remark}
\begin{document}
\bibliographystyle{IEEEtran}
\title{Fluctuations of an Improved Population Eigenvalue Estimator in Sample Covariance Matrix Models}
\author{J. Yao, R. Couillet, J. Najim, M. Debbah\\
\today
\thanks{This work was partially supported by Agence Nationale de la Recherche (France), program ANR-07-MDCO-012-01 'Sesame'.}
\thanks{Yao and Najim are with T\'el\'ecom Paristech, France. {\tt \{yao,najim\}@telecom-paristech.fr}\ ;}
\thanks{Yao is also with Ecole Normale Sup\'erieure and Najim, with CNRS.}
\thanks{Couillet and Debbah are with Sup\'elec, France. {\tt \{romain.couillet, merouane.debbah\}@supelec.fr},}
\thanks{Debbah also holds Alcatel-Lucent/Sup\'elec Flexible Radio chair,
France\emph{}.}
}

\maketitle

\begin{abstract}
  This article provides a central limit theorem for a consistent
  estimator of population eigenvalues with large multiplicities based
  on sample covariance matrices. The focus is on limited sample size situations, whereby the number of available observations is known and comparable in magnitude to the observation dimension. An exact expression as well as an
  empirical, asymptotically accurate, approximation of the limiting
  variance is derived. Simulations are performed that corroborate the theoretical claims. A specific application to wireless sensor networks is developed.
\end{abstract}

\section{Introduction}

Problems of statistical inference based on $M$ independent
observations of an $N$-variate random variable $\bf y$, with
$\mathbb{E}[{\bf y}]=0$ and $\mathbb{E} [{\bf y} {\bf y}^H]={\bf{R}}_N$
have drawn the attention of researchers from many fields for years:
Portfolio optimization in finance \cite{PLE02}, gene coexistence in
biostatistics \cite{LUO07}, channel capacity in wireless communications
\cite{Abla09}, power estimation in sensor networks \cite{COU10b}, array processing \cite{MES08c}, etc.

In particular, retrieving spectral properties of the {\it population covariance
  matrix} ${\bf R}_N$, based on the observation of $M$ independent and identically distributed (i.i.d.) samples ${\bf
  y}^{(1)},\ldots,{\bf y}^{(M)}$, is paramount to many
questions of general science. If $M$ is large compared to $N$, then it
is known that almost surely $\Vert \hat{\bf R}_N-{\bf R}_N\Vert \to 0$,
as $M\to\infty$, for any standard matrix norm, where $\hat{\bf R}_N$
is the {\it sample covariance matrix} ${\hat{\bf R}_N} \triangleq
\frac1M\sum_{m=1}^M {\bf y}^{(m)}{\bf y}^{(m)H}$. However, one cannot
always afford a large number of samples, especially in wireless
communications where the number of available samples has often a
size comparable to the dimension of each sample. In order to cope with
this issue, random matrix theory \cite{SIL06,COUbook} has proposed new
tools, mainly spurred by the {\it G-estimators} of Girko
\cite{GIR00}. Other works include convex optimization methods
\cite{SIL92,KAR08} and free probability tools \cite{RYA07,COU08}. Many
of those estimators are consistent in the sense that they are
asymptotically unbiased as $M,N$ grow large at the same
rate. Nonetheless, only recently have techniques been unveiled which
allow to estimate individual eigenvalues and functionals of
eigenvectors of $\bf R$. The main contributor is Mestre
\cite{MES08}-\cite{MES08b} who studies the case where ${\bf R}_N = {\bf U}_N {\bf D}_N {\bf U}_N^{H}$ with ${\bf D}_N$ diagonal with entries of large multiplicities and ${\bf U}_N$ with i.i.d. entries. For this model, he provides an estimator for every eigenvalue of $\bf R$ with large multiplicity under some separability
condition, see also Vallet {\it et al.}  \cite{VAL09}, Couillet
{\it et al.} \cite{COU10b} for more elaborate models.


These estimators, although proven asymptotically unbiased, have
nonetheless not been fully characterized in terms of performance
statistics. It is in particular fundamental to evaluate the variance
of these estimators for not-too-large $M,N$. The purpose of this
article is to study the fluctuations of the population eigenvalue
estimator of \cite{MES08b} in the case of structured population covariance matrices.
A central limit theorem (CLT) is provided to describe the asymptotic fluctuations
of the estimators with exact expression for the variance as $M,N$ tend to infinity . An empirical
approximation, asymptotically accurate is also derived.

The results are applied in a cognitive radio context in which we assume the co-existence of a
licensed (primary) network and an opportunistic (secondary) network
aiming at reusing the bandwidth resources left unoccupied by the
primary network. The eigenvalue estimator is used here by secondary
users to estimate the transmit power of primary users, while the
fluctuations are used to provide a confidence margin on the estimate.

The remainder of the article is structured as follows: In Section \ref{sec:model}, the
system model is introduced and the main results from
\cite{MES08,MES08b} are recalled. In Section \ref{sec:CLT}, the CLT
for the estimator in \cite{MES08b} is stated with the asymptotic
variance. In Section \ref{sec:estimation-cov}, an empirical
approximation for the variance is derived. A cognitive radio
application of these results is provided in Section
\ref{sec:application-cognitive}, with comparative Monte Carlo
simulations. Finally, Section \ref{sec:conclusion} concludes this
article. Technical proofs are postponed to the appendix.

\section{Estimation of the population eigenvalues}
\label{sec:model}

\subsection{Notations}
In this paper, the notations $s,{\bf x}, {\bf M}$ stand for scalars,
vectors and matrices, respectively. As usual, $\| {\bf x}\|$
represents the Euclidean norm of vector ${\bf x}$ and $\|{\bf M}\|$
stands for the spectral norm of ${\bf M}$. The superscripts
$(\cdot)^T$ and $(\cdot)^H$ respectively stand for the transpose and
transpose conjugate; the trace of ${\bf M}$ is denoted by
$\mathrm{Tr}( {\bf M})$; the mathematical expectation operator, by
$\mathbb{E}$. If ${\bf x}$ is a $N\times 1$ vector, then $\mathrm{diag}({\bf
  x})$ is the $N\times N$ matrix with diagonal elements the components
of ${\bf x}$.  If $z\in \mathbb{C}$, then $\Re(z)$ and $\Im(z)$
respectively stand for $z$'s real and imaginary parts, while
$\mathbf{i}$ stands for $\sqrt{-1}$; $\overline{z}$ stands
for $z$'s conjugate and $\delta_{k\ell}$ is denoted as Kronecker's symbol (whose value is $1$ if $k=\ell$, $0$
  otherwise).

If the support ${\mathcal S}$ of a probability measure over
$\mathbb{R}$ is the finite union of closed compact intervals
${\mathcal S}_k$ for $1\le k\le L$, we will refer to each compact
interval ${\mathcal S}_k$ as a {\em cluster} of ${\mathcal S}$.

If ${\bf Z} \in \mathbb{C}^{N\times N}$ is a nonnegative Hermitian matrix
with eigenvalues $(\xi_i; \ 1\le i\le N)$, we denote in the sequel by
$\mathrm{eig}({\bf Z})=\{\xi_i, 1 \leq i \leq N\}$ the set of its eigenvalues and by $F^{\bf Z}$
the empirical distribution of its eigenvalues (also called
{\em spectral distribution} of $\bf Z$), {\em i.e.}:
$$
F^{\bf Z}(d\,\lambda) = \frac 1N \sum_{i=1}^N \delta_{\xi_i}(d\,\lambda)\ ,
$$
where $\delta_x$ stands for the Dirac probability measure at $x$.

Convergence in distribution will be denoted by
$\xrightarrow[]{\mathcal D}$, in probability by
$\xrightarrow[]{\mathcal P}$; and almost sure convergence, by
$\xrightarrow[]{a.s.}$.

\subsection{Matrix Model}

Consider a $N\times M$ matrix ${\bf X}_N=(X_{ij})$ whose entries are
independent and identically distributed (i.i.d.) random variables,
with distribution ${\mathcal C}{\mathcal N}(0,1)$, i.e.  $X_{ij} = U +
\mathbf{i} V$, where $U,V$ are both i.i.d. real Gaussian random variables
${\mathcal N}(0,\frac{1}{2})$. Let ${\bf R}_N$ be a $N\times N$ Hermitian
matrix with $L$ ($L$ being fixed) distinct eigenvalues $\rho_1<\cdots<
\rho_L$ with respective multiplicities $N_1,\cdots, N_L$ (notice that
$\sum_{i=1}^L N_i = N$). Consider now
$$
{\bf Y}_N = {\bf R}_N^{1/2} {\bf X}_N\ .
$$
The matrix ${\bf Y}_N=[{\bf y}_1,\cdots,{\bf y}_M]$ is the concatenation of
$M$ independent observations $[{\bf y}_1,\cdots,{\bf y}_M]$, where each observation
writes ${\bf y}_i = {\bf R}_N^{1/2}{\bf x}_i$ with ${\bf X}_N= [{\bf
  x}_1,\cdots, {\bf x}_M]$.  In particular, the (population) covariance matrix of each observation ${\bf y}_i$
is ${\bf R}_N =\mathbb{E} {\bf y}_i {\bf y}_i^H$. In this article, we are interested in recovering information on ${\bf R}_N$ based on the
observation
$$
\hat{\bf R}_N = \frac 1M {\bf R}_N^{1/2} {\bf X}_N {\bf X}_N^H {\bf R}_N^{1/2}\ ,
$$
which is referred to as the {\em sample covariance matrix}.

It is in general a complicated task to infer the spectral
properties of ${\bf R}_N$ based on $\hat{\bf R}_N$ for all finite $N,M$. Instead, in
the following, we assume that $N$ and $M$ are large, and consider the
following asymptotic regime:

{\bf{Assumption 1}} (A1): \begin{equation} N,M\to \infty\ ,\quad   \textrm{with}\quad  \frac NM \to c\in (0,\infty)\ ,\quad
\textrm{and}\quad \frac {N_i}M \to c_i\in (0,\infty)\ , \ 1\le i\le L. \end{equation}
This assumption will be shortly referred to as $N,M\to \infty$.

{\bf{Assumption 2}} (A2):

  We assume that the limiting support $\mathcal S$ of the eigenvalue distribution of
  $\hat{\bf R}_N$ is formed of $L$ compact disjoint subsets (cf. Figure \ref{fig:clusters}). Following \cite{MES08b}, one can also reformulate this condition in a more analytic manner: The limiting support of $\hat{\bf R}_N$ is formed of $L$ clusters if and only if
for $i\in \{1,..,L\}$, $\inf_N \{ \frac{M}{N}-\Psi_N(i)\} >0$,
where

$$\Psi_N(i)=
\begin{cases} \frac{1}{N}\sum_{r=1}^L N_r\left( \frac{\rho_r}{\rho_r-\alpha_1}\right)^2 & \text{$m=1$,} \\
\max \Big{\{} \frac{1}{N}\sum_{r=1}^L N_r\left( \frac{\rho_r}{\rho_r-\alpha_{m-1}}\right)^2 ,
\frac{1}{N}\sum_{r=1}^L N_r\left( \frac{\rho_r}{\rho_r-\alpha_m}\right)^2  \Big{\}} & \text{$1 < m < L$,} \\
\frac{1}{N}\sum_{r=1}^L N_r\left( \frac{\rho_r}{\rho_r-\alpha_{L-1}} \right)^2 & \text{$m=L$}
\end{cases}$$
where $\alpha_1 \leq \cdots \leq \alpha_{L-1}$ are $L-1$ different ordered solutions to the equation

$$\frac{1}{N} \sum_{r=1}^{L} N_r \frac{\rho_r^2}{(\rho_r- x)^3}=0.$$
This condition is also called the {\em separability} condition.

Figure \ref{fig:clusters} depicts the eigenvalues of a realization of the random matrix $\hat{\bf R}_N$ and
the associated limiting distribution as $N,M$ grow large, for $\rho_1=1$,
$\rho_2=3$, $\rho_3=10$ and $N=60$ with equal multiplicity.

\subsection{Mestre's Estimator of the population eigenvalues}

In \cite{MES08b}, an estimator of the population covariance matrix eigenvalues $(\rho_k; \
1\le k\le L)$ based on the observations $\hat{\bf R}_N$ is proposed.

\begin{theorem}\cite{MES08b} \label{th:mestre}
  \label{prop:mestre} Denote by
  $\hat{\lambda}_1 \leq \cdots \leq \hat{\lambda}_N$ the ordered eigenvalues of $\hat{\bf R}_N$.
  Let $M,N\to \infty$ in the sense of the assumption (A1). Under the assumptions (A1)-(A2), the following convergence holds true:
  \begin{equation} \label{eq:estimator}
    \hat{\rho}_k - \rho_k \xrightarrow[M,N\to \infty]{a.s.} 0\ ,
  \end{equation}
  where
  \begin{equation}
          \label{eq:Mestre_tk}
	  \hat{\rho}_k = \frac{M}{N_k}\sum_{m\in \mathcal N_k}\left(\hat\lambda_m - \hat{\mu}_m\right)\ ,
  \end{equation}
  with $\mathcal N_k =\{ \sum_{j=1}^{k-1}N_j +1,\ldots,\sum_{j=1}^kN_j
  \}$ and $\hat\mu_1 \leq \cdots\leq \hat\mu_N$ the (real and) ordered
  solutions of:
\begin{equation}\label{eq:def-mu}
\frac 1N \sum_{m=1}^N \frac{\hat \lambda_m}{\hat\lambda_m - \mu} = \frac MN\ .
\end{equation}
\end{theorem}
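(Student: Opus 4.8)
Following \cite{MES08b}, the natural route is to represent $\hat\rho_k$ as a complex contour integral, reduce the random problem to a deterministic one, and then evaluate by residues. Let $m_N(z)=\frac1N\mathrm{Tr}(\hat{\bf R}_N-z{\bf I})^{-1}$ be the Stieltjes transform of $\hat{\bf R}_N$, and set $\underline{m}_N(z)=\frac NM m_N(z)-\frac{M-N}{Mz}$, the Stieltjes transform of the companion matrix $\frac1M{\bf X}_N^H{\bf R}_N{\bf X}_N$. Since $\frac1N\sum_{m}\frac{\hat\lambda_m}{\hat\lambda_m-\mu}=1+\mu\,m_N(\mu)$, equation (\ref{eq:def-mu}) is equivalent to $\underline{m}_N(\mu)=0$; thus the $\hat\mu_m$ are exactly the $N$ zeros of $\underline{m}_N$, while the $\hat\lambda_m$ are its only poles apart from a possible pole at $z=0$. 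Consequently, for a positively oriented simple contour $\Gamma$ avoiding every $\hat\lambda_m$ and $\hat\mu_m$, the residue theorem yields
\[
\frac{1}{2\pi\mathbf{i}}\oint_{\Gamma}z\,\frac{\underline{m}_N'(z)}{\underline{m}_N(z)}\,dz=\sum_{m:\,\hat\mu_m\in\mathrm{int}\,\Gamma}\hat\mu_m-\sum_{m:\,\hat\lambda_m\in\mathrm{int}\,\Gamma}\hat\lambda_m,
\]
the pole at $z=0$, if enclosed, contributing nothing because $z\,\underline{m}_N'/\underline{m}_N$ is analytic there. Hence, if for each $k$ one exhibits a fixed contour $\mathcal C_k$ whose interior contains exactly the $\hat\lambda_m$ and $\hat\mu_m$ with $m\in\mathcal N_k$, then by (\ref{eq:Mestre_tk}),
\[
\hat\rho_k=-\frac{M}{N_k}\cdot\frac{1}{2\pi\mathbf{i}}\oint_{\mathcal C_k}z\,\frac{\underline{m}_N'(z)}{\underline{m}_N(z)}\,dz.
\]

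To construct $\mathcal C_k$ and pass to the limit, let $\mathcal S=\bigcup_{j=1}^{L}[a_j,b_j]$ be the limiting spectral support of $\hat{\bf R}_N$ from (A2), its $L$ intervals (clusters) being pairwise disjoint. The ``no eigenvalue outside the support'' and ``exact separation'' theorems of Bai and Silverstein --- whose hypotheses are precisely the separability condition (A2) --- guarantee that, almost surely for all large $N$, the $\hat\lambda_m$ with $m\in\mathcal N_k$ are exactly the eigenvalues lying near the $k$-th cluster, all the others remaining uniformly away from it. The $\hat\mu_m$ interlace the $\hat\lambda_m$ (immediate from the strict monotonicity of $\mu\mapsto\frac1N\sum_m\frac{\hat\lambda_m}{\hat\lambda_m-\mu}$ on the intervals cut out by the $\hat\lambda_m$), so this also determines which $\hat\mu_m$ fall near the $k$-th cluster. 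Hence one may fix a contour $\mathcal C_k$ around the $k$-th cluster, slightly enlarged into the adjacent spectral gaps but avoiding $\mathcal S$ and the neighbouring clusters, which almost surely for all large $N$ has the required enclosure property. Since $\mathcal C_k$ keeps a positive distance from the spectrum, $\underline{m}_N\to\underline{m}$ and $\underline{m}_N'\to\underline{m}'$ uniformly on $\mathcal C_k$ by the Marchenko--Pastur/Silverstein convergence, and $\underline{m}$ has no zero on $\mathcal C_k$; therefore
\[
\hat\rho_k\ \xrightarrow[M,N\to\infty]{a.s.}\ -\frac{1}{c_k}\cdot\frac{1}{2\pi\mathbf{i}}\oint_{\mathcal C_k}z\,\frac{\underline{m}'(z)}{\underline{m}(z)}\,dz.
\]

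Finally, the limiting transform $w=\underline{m}(z)$ satisfies the Silverstein equation $z=-\frac1w+\sum_{r=1}^{L}\frac{c_r\rho_r}{1+\rho_r w}$, so the substitution $w=\underline{m}(z)$ turns the last integral into $\frac{1}{2\pi\mathbf{i}}\oint_{\underline{m}(\mathcal C_k)}\left(-\frac1w+\sum_{r}\frac{c_r\rho_r}{1+\rho_r w}\right)\frac{dw}{w}$, a contour integral of a fixed rational function of $w$ whose only poles are $w=0$ (with residue $\sum_r c_r\rho_r$) and $w=-1/\rho_r$ (with residue $-c_r\rho_r$), $r=1,\dots,L$. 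The separability condition (A2), through the Silverstein--Choi description of $\mathcal S$ in terms of the real branches of $w\mapsto-\frac1w+\sum_r\frac{c_r\rho_r}{1+\rho_rw}$, is exactly what forces $\underline{m}(\mathcal C_k)$ to be a loop encircling only the pole $w=-1/\rho_k$; the residue theorem then gives $-\frac1{c_k}\cdot(-c_k\rho_k)=\rho_k$, which is (\ref{eq:estimator}). The main obstacle is this last piece of bookkeeping together with its sample-side counterpart: proving, under (A2), that $\mathcal C_k$ isolates exactly the indices in $\mathcal N_k$ and that $\underline{m}(\mathcal C_k)$ isolates exactly the pole $w=-1/\rho_k$; both rest on the fine geometry of the support $\mathcal S$ and on Bai--Silverstein exact separation, and it is here that separability is indispensable. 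The remaining ingredients --- the residue calculus and the standard almost sure convergence of Stieltjes transforms off the spectrum --- are routine.
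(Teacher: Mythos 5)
Your proposal is correct and follows essentially the same route as the paper (and Mestre's original argument): represent the estimator as a contour integral of $z\,m_{\underline{\hat{\bf R}}_N}'(z)/m_{\underline{\hat{\bf R}}_N}(z)$ using that the $\hat\mu_m$ are the zeros of the companion Stieltjes transform, invoke no-eigenvalue-outside-the-support and exact separation (the paper's appeal to \cite[Lemma 1]{MES08b}) to fix the enclosure bookkeeping, pass to the limit by uniform convergence of the Stieltjes transforms on the contour, and evaluate the limiting integral by the change of variable given by the Silverstein equation and residues. The only difference is cosmetic: you run the argument from the explicit formula \eqref{eq:Mestre_tk} toward the integral and compare directly with the limit $\underline{m}$, whereas the paper defines $\hat\rho_k$ by the integral \eqref{eq:hattk}, compares with the finite-$N$ deterministic equivalent $\underline{m}_N$, and recovers \eqref{eq:Mestre_tk} by residue calculus.
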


\begin{figure}
  \centering
  \begin{tikzpicture}[font=\footnotesize,scale=0.8]
    \renewcommand{\axisdefaulttryminticks}{8}
    \tikzstyle{every major grid}+=[style=densely dashed]
\pgfplotsset{every axis y label/.append style={yshift=-20pt}}
\pgfplotsset{every axis x label/.append style={yshift=5pt}}
    \tikzstyle{every axis legend}+=[cells={anchor=west},fill=white,
        at={(0.98,0.98)}, anchor=north east, font=\scriptsize ]

    \begin{axis}[
      xmajorgrids=true,
      xlabel={Eigenvalues},
      ylabel={Density},
      xtick = {1,3,10},
      ytick = {},
      yticklabels = {},
      xmin=0,
      xmax=15,
      ymin=0,
      ymax=0.07,
      ]

      \addplot[smooth,red,line width=0.5pt] plot coordinates{
(0,0) (0.301000,-0.000000) (0.401000,-0.000000) (0.501000,-0.000000) (0.601000,-0.000000) (0.701000,0.019786) (0.801000,0.056237) (0.901000,0.061934) (1.001000,0.059629) (1.101000,0.053338) (1.201000,0.044219) (1.301000,0.031803) (1.401000,0.008620) (1.501000,-0.000000) (1.601000,-0.000000) (1.701000,-0.000000) (1.801000,0.000000) (1.901000,0.000000) (2.001000,0.000000) (2.101000,0.010929) (2.201000,0.015193) (2.301000,0.017541) (2.401000,0.018929) (2.501000,0.019721) (2.601000,0.020103) (2.701000,0.020184) (2.801000,0.020037) (2.901000,0.019709) (3.001000,0.019234) (3.101000,0.018636) (3.201000,0.017929) (3.301000,0.017125) (3.401000,0.016229) (3.501000,0.015242) (3.601000,0.014157) (3.701000,0.012965) (3.801000,0.011643) (3.901000,0.010150) (4.001000,0.008405) (4.101000,0.006210) (4.201000,0.002623) (4.301000,-0.000000) (4.401000,-0.000000) (4.501000,-0.000000) (4.601000,-0.000000) (4.701000,-0.000000) (4.801000,-0.000000) (4.901000,-0.000000) (5.001000,-0.000000) (5.101000,-0.000000) (5.201000,-0.000000) (5.301000,0.000000) (5.401000,0.000000) (5.501000,0.000000) (5.601000,0.000000) (5.701000,0.000000) (5.801000,0.000000) (5.901000,0.000000) (6.001000,0.000000) (6.101000,0.000000) (6.201000,0.000000) (6.301000,0.000000) (6.401000,0.000000) (6.501000,0.000000) (6.601000,0.000000) (6.701000,0.000000) (6.801000,0.000000) (6.901000,0.000939) (7.001000,0.002160) (7.101000,0.002848) (7.201000,0.003352) (7.301000,0.003751) (7.401000,0.004078) (7.501000,0.004353) (7.601000,0.004586) (7.701000,0.004786) (7.801000,0.004958) (7.901000,0.005106) (8.001000,0.005234) (8.101000,0.005344) (8.201000,0.005438) (8.301000,0.005519) (8.401000,0.005587) (8.501000,0.005644) (8.601000,0.005691) (8.701000,0.005729) (8.801000,0.005758) (8.901000,0.005780) (9.001000,0.005794) (9.101000,0.005802) (9.201000,0.005804) (9.301000,0.005801) (9.401000,0.005792) (9.501000,0.005779) (9.601000,0.005760) (9.701000,0.005738) (9.801000,0.005711) (9.901000,0.005681) (10.001000,0.005647) (10.101000,0.005610) (10.201000,0.005570) (10.301000,0.005526) (10.401000,0.005479) (10.501000,0.005430) (10.601000,0.005378) (10.701000,0.005323) (10.801000,0.005266) (10.901000,0.005207) (11.001000,0.005145) (11.101000,0.005081) (11.201000,0.005014) (11.301000,0.004945) (11.401000,0.004874) (11.501000,0.004801) (11.601000,0.004726) (11.701000,0.004648) (11.801000,0.004569) (11.901000,0.004487) (12.001000,0.004403) (12.101000,0.004316) (12.201000,0.004228) (12.301000,0.004136) (12.401000,0.004043) (12.501000,0.003947) (12.601000,0.003848) (12.701000,0.003746) (12.801000,0.003642) (12.901000,0.003534) (13.001000,0.003423) (13.101000,0.003307) (13.201000,0.003188) (13.301000,0.003065) (13.401000,0.002936) (13.501000,0.002802) (13.601000,0.002661) (13.701000,0.002513) (13.801000,0.002356) (13.901000,0.002188) (14.001000,0.002007) (14.101000,0.001809) (14.201000,0.001587) (14.301000,0.001330) (14.401000,0.001011) (14.501000,0.000528) (14.601000,-0.000000) (14.701000,-0.000000) (14.801000,-0.000000) (14.901000,-0.000000)
      };
      \addplot[only marks,mark=x,line width=0.5pt] plot coordinates{
(0.725210,0)(0.761293,0)(0.773428,0)(0.804447,0)(0.850258,0)(0.876619,0)(0.914223,0)(0.936101,0)(0.966339,0)(0.983292,0)(1.000177,0)(1.037538,0)(1.049136,0)(1.078080,0)(1.126212,0)(1.159059,0)(1.193031,0)(1.232553,0)(1.295855,0)(1.357692,0)(2.087068,0)(2.322090,0)(2.404135,0)(2.428502,0)(2.523406,0)(2.591069,0)(2.611899,0)(2.760563,0)(2.842153,0)(2.931724,0)(3.005278,0)(3.114150,0)(3.205625,0)(3.330611,0)(3.448666,0)(3.493621,0)(3.634134,0)(3.752682,0)(3.923243,0)(4.018420,0)(7.376547,0)(7.714738,0)(7.892245,0)(8.144817,0)(8.710341,0)(8.864955,0)(9.216528,0)(9.416072,0)(9.976535,0)(10.291326,0)(10.398824,0)(10.794176,0)(11.077766,0)(11.360634,0)(11.786087,0)(12.331401,0)(12.456995,0)(12.879208,0)(13.498515,0)(13.942644,0)
      };
      \legend{{Asymptotic spectrum},{Empirical eigenvalues}}
    \end{axis}
  \end{tikzpicture}
  \caption{Empirical and asymptotic eigenvalue distribution of $\hat{{\bf R}}_N$ for $L=3$, $\rho_1=1$, $\rho_2=3$, $\rho_3=10$, $N/M=c=0.1$, $N=60$, $N_1=N_2=N_3=20$.}
  \label{fig:clusters}
\end{figure}
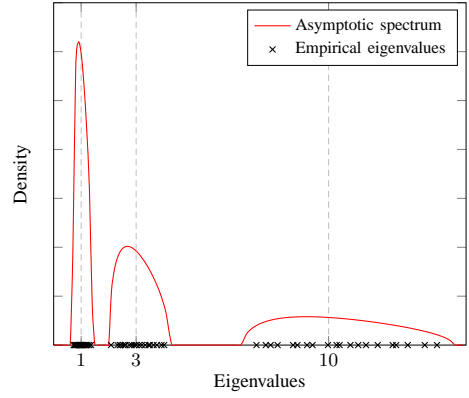

\vspace{4mm}

\subsection{Integral representation of estimator $\hat \rho_k$ - Stieltjes transforms}

The proof of Theorem \ref{prop:mestre} relies on random matrix theory, and in particular,
\cite{MAR67,SIL95} use as a key ingredient the {\it Stieltjes transform}.

The {\it Stieltjes transform} $m_{\mathbb{P}}$ of a probability
distribution $\mathbb{P}$ over $\mathbb{R}^+$ is a $\mathbb{C}$-valued
function defined by:
$$
m_{\mathbb{P}}(z)= \int_{\mathbb{R}^+} \frac{\mathbb{P}(d\lambda)}{\lambda-z}\ , \quad z\in \mathbb{C} \backslash \mathbb{R}^+\ .
$$
There also exists an inverse formula to recover the probability
distribution associated to a Stieljes transform: Let $a < b$ be two continuity points of the cumulative distribution function associated to $\mathbb{P}$, then

$$\mathbb{P}([a,b]) = \frac{1}{\pi}  \lim_{y \downarrow 0} \Im \left[ \int_a^b   m_{\mathbb{P}} (x+iy)dx \right].$$

In the case where $F^{\bf Z}$ is the spectral distribution associated
to a nonnegative Hermitian matrix ${\bf Z}\in\mathbb{C}^{N\times N}$
with eigenvalues $(\xi_i; \ 1\le i\le N)$, the Stieltjes transform
$m_{\bf Z}$ of $F^{\bf Z}$ takes the particular form:
\begin{eqnarray*}
  m_{\bf Z}(z) &=&\int \frac{F^{\bf Z}(d\,\lambda)} {\lambda-z}  \\
&=& \frac1N\sum_{i=1}^N \frac1{\xi_i-z} \ = \ \frac 1N \tr \left( {\bf Z} -z{\bf I}_N\right)^{-1}\ ,
\end{eqnarray*}
which can be seen as the normalized trace of the resolvent $\left( {\bf Z} -z{\bf I}_N\right)^{-1}$.
Since the seminal  paper of Mar\v{c}enko and Pastur \cite{MAR67}, the
Stieltjes transform has proved to be extremely efficient to describe
the limiting spectrum of large dimensional random matrices.

In the following, we recall some elements of the proof of Theorem \ref{th:mestre}, necessary for the
remainder of the article. The first important result is due to Bai and
Silverstein \cite{SIL95} (see also \cite{MAR67}).

\begin{theorem} \cite{SIL95} Denote by
$F^{\bf R}$ the limiting spectral distribution of ${\bf R}_N$, {\em i.e.}
$F^{\bf R}(d\,\lambda) = \sum_{k=1}^L \frac{c_k}{c} \delta_{\rho_k}(d\,\lambda)$. Under the assumption (A1), the spectral distribution $F^{\hat{\bf
    R}_N}$ of the sample covariance matrix $\hat{\bf R}_N$ converges
(weakly and almost surely) to a probability distribution $F$ as
$M,N\to \infty$, whose Stieltjes transform $m(z)$ satisfies:
$$
m(z)=\frac1c \underline{m}(z) -
\left(1-\frac1c\right)\frac1z\ ,
$$
for $z\in \mathbb{C}^+ = \{ z\in \mathbb{C},\ \Im(z)>0\}$, where $\underline{m}(z)$ is
defined as the unique solution in $\mathbb{C}^+$ of:
$$
\underline{m}(z) = - \left( z - c\int \frac{t}{1+t\underline{m}(z)}dF^{\bf R}(t) \right)^{-1}.
$$

\end{theorem}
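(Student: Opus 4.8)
The plan is to prove the statement at the level of Stieltjes transforms: since pointwise convergence of Stieltjes transforms on $\mathbb{C}^+$ is equivalent to weak convergence of the underlying probability measures, it suffices to show that $m_N(z) := \frac 1N \tr(\hat{\bf R}_N - z{\bf I}_N)^{-1}$ converges almost surely, for each fixed $z \in \mathbb{C}^+$, to the function $m(z)$ described in the statement, together with almost sure tightness (which follows once one controls $\Vert\hat{\bf R}_N\Vert$). It is convenient to work also with the companion matrix $\underline{\bf R}_N := \frac 1M {\bf X}_N^H {\bf R}_N {\bf X}_N \in \mathbb{C}^{M\times M}$, which shares the nonzero eigenvalues of $\hat{\bf R}_N$; writing $\underline m_N(z) := \frac 1M \tr(\underline{\bf R}_N - z{\bf I}_M)^{-1}$, one has the exact identity $m_N(z) = \frac{1}{c_N}\underline m_N(z) - \big(1 - \frac{1}{c_N}\big)\frac 1z$ with $c_N = N/M$, which in the limit becomes the relation displayed in the theorem. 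Everything thus reduces to identifying the limit of $\underline m_N$.

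For that I would run the classical resolvent argument. Write $\hat{\bf R}_N = \frac 1M \sum_{j=1}^M {\bf y}_j {\bf y}_j^H$ with ${\bf y}_j = {\bf R}_N^{1/2}{\bf x}_j$, set ${\bf Q}(z) = (\hat{\bf R}_N - z{\bf I}_N)^{-1}$, and let ${\bf Q}_j(z)$ be the same resolvent with the $j$-th rank-one term deleted. Two standard lemmas are the workhorses: (i) the \emph{rank-one perturbation bound}, $\big|\tr {\bf A}\,({\bf Q}(z) - {\bf Q}_j(z))\big| \le \Vert{\bf A}\Vert / |\Im z|$, which makes deletion of a single column negligible at the level of normalized traces; and (ii) the \emph{trace lemma}, $\frac 1M {\bf x}_j^H {\bf B}{\bf x}_j - \frac 1M \tr {\bf B} \to 0$ for ${\bf B}$ independent of ${\bf x}_j$ with $\Vert{\bf B}\Vert$ bounded, made quantitative and summable in $N$ (hence a.s., via Borel--Cantelli) thanks to the Gaussianity of the entries, e.g. via the Hanson--Wright inequality or a direct fourth-moment computation. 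Isolating ${\bf y}_j$ by Sherman--Morrison gives ${\bf y}_j^H {\bf Q}(z) = {\bf y}_j^H {\bf Q}_j(z)/\big(1 + \frac 1M {\bf y}_j^H {\bf Q}_j(z){\bf y}_j\big)$; substituting this into $z{\bf Q}(z) + {\bf I}_N = \hat{\bf R}_N {\bf Q}(z) = \frac 1M \sum_j {\bf y}_j {\bf y}_j^H {\bf Q}(z)$ and taking the normalized trace, the trace lemma replaces each quadratic form $\frac 1M {\bf x}_j^H {\bf R}_N^{1/2}{\bf Q}_j{\bf R}_N^{1/2}{\bf x}_j$ by $\frac 1M \tr {\bf R}_N {\bf Q}$ up to almost surely $o(1)$ errors. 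After algebra this yields a \emph{perturbed fixed-point equation}, $\underline m_N(z) = -\big(z - c_N \int \frac{t}{1 + t\,\underline m_N(z)}\,dF^{{\bf R}_N}(t)\big)^{-1} + \varepsilon_N(z)$ with $\varepsilon_N(z) \to 0$ almost surely.

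It then remains to pass to the limit and to pin down the limit uniquely. One shows that $\underline m(z) = -\big(z - c\int\frac{t}{1+t\,\underline m(z)}\,dF^{\bf R}(t)\big)^{-1}$ admits a \emph{unique} solution in $\mathbb{C}^+$: the right-hand side, as a map $\mathcal T_z$, sends $\mathbb{C}^+$ to $\mathbb{C}^+$; any solution satisfies the imaginary-part and growth bounds characterizing Stieltjes transforms of sub-probability measures, so $z\,\underline m(z) \to -1$ as $z\to\infty$ along $i\mathbb{R}^+$, which selects the branch; and two solutions must coincide by a contraction estimate on $\mathcal T_z$ for $|\Im z|$ large, extended to all of $\mathbb{C}^+$ by analyticity. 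Combining uniqueness with $\varepsilon_N(z) \to 0$ a.s. and $F^{{\bf R}_N} \to F^{\bf R}$, and using that the $\underline m_N$ form a normal family of analytic functions on $\mathbb{C}^+$, every almost sure limit point of $\underline m_N(z)$ must equal $\underline m(z)$; hence $\underline m_N(z) \to \underline m(z)$ a.s. for each $z\in\mathbb{C}^+$, the linking identity gives $m_N(z)\to m(z)$, and $F^{\hat{\bf R}_N}\Rightarrow F$ weakly and almost surely.

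The main obstacle, as usual in these proofs, is upgrading convergence in probability to \emph{almost sure} convergence: one must verify that the exceptional events — where a quadratic form fails to concentrate, or where the denominator $1 + \frac 1M {\bf y}_j^H {\bf Q}_j(z){\bf y}_j$ approaches $0$ — have probabilities summable in $N$, so that Borel--Cantelli applies simultaneously to the finitely many evaluation points $z$ that are needed. The denominator is controlled by observing that its imaginary part keeps a fixed sign and is bounded away from $0$ on $\mathbb{C}^+$ (since $\Im z > 0$ and ${\bf Q}_j(z)$ has controlled imaginary part), but turning every such heuristic bound into a quantitative, summable estimate, and separately establishing the uniqueness of the fixed point, is where the bulk of the technical effort goes.
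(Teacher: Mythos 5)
The paper itself offers no proof of this statement: it is quoted directly from Bai and Silverstein \cite{SIL95} (see also \cite{MAR67}), so there is nothing internal to compare against, and your outline is in substance the classical strong-convergence argument of that cited source --- reduction to Stieltjes transforms, the companion matrix $\underline{\bf R}_N=\frac1M{\bf X}_N^H{\bf R}_N{\bf X}_N$ with the exact identity linking $m_N$ and $\underline m_N$, the leave-one-column-out resolvent with the rank-one perturbation bound and the quadratic-form (trace) lemma made summable by Gaussianity, the perturbed fixed-point equation, and identification of the limit via uniqueness plus normal families. Two points should be tightened. First, almost sure convergence at \emph{finitely many} $z$ is not enough; you need a countable family of points with an accumulation point in $\mathbb{C}^+$, which is harmless since $|\underline m_N(z)|\le 1/\Im z$ gives local boundedness, hence a normal family, and Vitali's theorem upgrades countable-point convergence to local uniform convergence. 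Second, your uniqueness argument (contraction of $\mathcal T_z$ for large $\Im z$, ``extended by analyticity'') suffices to identify the limit of $\underline m_N$, since all subsequential limits are analytic and agree for large $\Im z$; but it does not by itself prove the clause of the theorem that the equation has a \emph{unique} solution in $\mathbb{C}^+$ at each fixed $z$, because a stray solution at one point with small imaginary part need not lie on any analytic branch. The standard fix is the direct comparison: if $\underline m_1\neq\underline m_2$ both solve the equation at the same $z$, subtracting gives $\frac1{\underline m_1\underline m_2}=c\int\frac{t^2\,dF^{\bf R}(t)}{(1+t\underline m_1)(1+t\underline m_2)}$, which is contradicted by Cauchy--Schwarz together with the imaginary part of the defining equation. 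With those two repairs, your plan is a faithful reconstruction of the proof in the cited reference.
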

Note that $\underline{m}(z)$ is also a Stieltjes transform whose
associated distribution function will be denoted $\underline{F}$,
which turns out to be the limiting spectral distribution of
$F^{\hat{\underline{\bf R}}_N}$ where $\hat{\underline{\bf R}}_N$ is
defined as:
$$
\hat{\underline{\bf R}}_N \triangleq \frac{1}{M}{\bf X}_N^H {\bf R}_N{\bf X}_N\ .
$$
Denote by $m_{ \hat{\bf R}_N}(z)$ and $m_{ \hat{\underline{\bf
      R}}_N}(z)$ the Stieltjes transforms of $F^{ \hat{\bf R}_N}$ and
$F^{\hat{\underline{\bf R}}_N}$. Notice in particular that
$$
 m_{ \hat{\bf R}_N}(z) = \frac{M}N m_{ \hat{\underline{\bf
      R}}_N}(z) - \left(1 - \frac{M}N\right)\frac1z\ .
$$
\begin{remark} This relation associated to \eqref{eq:def-mu} readily implies that
$m_{ \hat{\underline{\bf R}}_N}(\hat \mu_i) = 0$. Otherwise stated,
the $\hat\mu_i$'s are the zeros of $m_{ \hat{\underline{\bf
      R}}_N}$. This fact will be of importance in the sequel.
\end{remark}

Denote by $m_N(z)$ and $\underline{m}_N(z)$
the finite-dimensional counterparts of $m(z)$ and $\underline{m}(z)$,
respectively, defined by the relations:
\begin{eqnarray*}
  \underline{m}_N(z) &=& - \left( z - \frac{N}M\int \frac{t}{1+t\underline{m}_N(z)}dF^{{\bf R}_N}(t) \right)^{-1}\ , \\
  m_N(z) &=& \frac{M}N \underline{m}_N(z) - \left(1 - \frac{M}N\right)\frac1z\ .
\end{eqnarray*}
It can be shown that $m_N$ and $\underline{m}_N$ are Stieltjes transforms of given probability measures
$F_N$ and $\underline{F}_N$, respectively (cf. \cite[Theorem 3.2]{COUbook}).

With these notations at hand, we can now derive Theorem \ref{th:mestre}.
By Cauchy's formula, write:
$$
\rho_k = \frac{N}{N_k} \frac{1}{2 i \pi} \oint_{\Gamma_k} \left(
  \frac{1}{N}\sum_{r=1}^{L} N_r \frac{w}{\rho_r-w} dw \right)\ ,
$$
where $\Gamma_k$ is a negatively oriented contour taking values on
$\mathbb{C} \setminus\{\rho_1,\cdots, \rho_L\}$ and only enclosing
$\rho_k$. With the change of variable $w=-\frac{1}{\underline{m}_{M}(z)}$
and the condition that the limiting support $\mathcal S$ of the
eigenvalue distribution of ${\bf R}_N$ is formed of $L$ distinct clusters
$({\mathcal S}_k, 1\le k\le L)$ (cf. Figure \ref{fig:clusters}), we can write:
\begin{equation}
\label{eq:eigenvalue}
  \rho_k = \frac{M}{2 i \pi N_k}\oint_{\mathcal C_k} z\frac{\underline{m}_N'(z)}
{\underline{m}_N(z)}dz\ ,\quad 1\le k\le L\
\end{equation}
where ${\mathcal C}_k$ and ${\mathcal C}_\ell$ denote negatively
oriented contours which enclose the corresponding clusters ${\mathcal S_k}$ and
${\mathcal S_\ell}$ respectively.
Defining
\begin{equation}
	\label{eq:hattk}
        \hat{\rho}_k \triangleq \frac{M}{2\pi i N_k}\oint_{\mathcal C_k} z\frac{m_{\hat{\underline{\bf R}}_N}'(z)}{m_{\hat{\underline{\bf R}}_N}(z)}dz\ ,\quad 1\le k\le L\ ,
\end{equation}
dominated convergence arguments ensure that $\rho_k-\hat{\rho}_k\to
0$, almost surely. The integral form of $\hat{\rho}_k$ can then be
explicitly computed thanks to residue calculus, and this finally
yields \eqref{eq:Mestre_tk}.

The main objective of this article is to study the performance of
the estimators $(\hat{\rho}_k,\ 1\le k\le L)$. More precisely, we will establish a
central limit theorem (CLT) for $(M(\hat{\rho}_k- \rho_k),\ 1\le k\le L)$ as
$M,N\to \infty$, explicitly characterize the limiting covariance
matrix $\boldsymbol{\Theta}= (\Theta_{k\ell})_{1\le k,\ell\le L}$, and
finally provide an estimator for $\boldsymbol{\Theta}$.

\section{Fluctuations of the population eigenvalue estimators}
\label{sec:CLT}

\subsection{The Central Limit Theorem}

The main result of this article is the following CLT which expresses the fluctuations of $(\hat{\rho}_k,\ 1\le k\le L)$.

\begin{theorem}\label{th:CLT}
Under the assumptions (A1)-(A2) and with the same notations:
$$
\left( M(\hat{\rho}_k- \rho_k), \ 1\le k\le L \right)
 \xrightarrow[M,N\to \infty]{\mathcal D} {\bf x} \sim {\mathcal N}_L(0,\boldsymbol{\Theta})\ ,
$$
where ${\mathcal N}_L$ refers to a real $L$-dimensional
Gaussian distribution, and $\boldsymbol{\Theta}$ is a $L\times L$
matrix whose entries $\Theta_{k\ell}$ are given by \eqref{eq:def-theta},
where ${\mathcal C}_k$ and ${\mathcal C}_\ell$ are defined as before (cf. Formula (\ref{eq:eigenvalue})).
\end{theorem}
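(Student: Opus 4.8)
The plan is to start from the contour-integral representation \eqref{eq:hattk} of $\hat{\rho}_k$ together with the corresponding representation \eqref{eq:eigenvalue} of $\rho_k$, and to write
$$
M(\hat{\rho}_k-\rho_k) = \frac{M^2}{2\pi i N_k}\oint_{\mathcal C_k} z\left(\frac{m_{\hat{\underline{\bf R}}_N}'(z)}{m_{\hat{\underline{\bf R}}_N}(z)} - \frac{\underline{m}_N'(z)}{\underline{m}_N(z)}\right)dz\ .
$$
The integrand is $z\,\frac{d}{dz}\log\big(m_{\hat{\underline{\bf R}}_N}(z)/\underline{m}_N(z)\big)$, so after an integration by parts the quantity of interest becomes (up to sign and the prefactor $M^2/N_k$) a contour integral of $\log\big(m_{\hat{\underline{\bf R}}_N}(z)/\underline{m}_N(z)\big)$, i.e. essentially a linear statistic of the random Stieltjes transform $m_{\hat{\underline{\bf R}}_N}$ around its deterministic equivalent $\underline{m}_N$. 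The key input I would invoke is the CLT for linear spectral statistics / for the resolvent trace of sample covariance matrices of Bai--Silverstein type: the centered process
$$
M\big(m_{\hat{\underline{\bf R}}_N}(z) - \underline{m}_N(z)\big), \qquad z\in\text{(a contour off the real axis)},
$$
converges, in the sense of finite-dimensional distributions (and tightly, uniformly on the contour), to a Gaussian analytic process $G(z)$ with an explicitly computable covariance kernel $\mathrm{Cov}(G(z_1),G(z_2))$ and zero mean (the Gaussian/complex entries and the $R_N^{1/2}X_N$ structure kill the bias term that otherwise appears for non-Gaussian entries). I would state this as a proposition, citing \cite{SIL95} and the relevant CLT references, with the contour chosen to enclose only cluster $\mathcal S_k$ and to stay in a region where, almost surely for $N$ large, no eigenvalue of $\hat{\underline{\bf R}}_N$ lies (this uses (A2), the exact separation results, and guarantees $m_{\hat{\underline{\bf R}}_N}$ has no zeros or poles on the contour so that the logarithm and the ratio are well defined and analytic on a neighbourhood of $\mathcal C_k$).

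Given that input, the argument proceeds by linearization: on the contour write $m_{\hat{\underline{\bf R}}_N}(z)/\underline{m}_N(z) = 1 + \big(m_{\hat{\underline{\bf R}}_N}(z)-\underline{m}_N(z)\big)/\underline{m}_N(z)$, where the second term is $O(1/M)$ uniformly, so $\log(\cdot)$ equals $\big(m_{\hat{\underline{\bf R}}_N}(z)-\underline{m}_N(z)\big)/\underline{m}_N(z)$ plus a uniformly $O(1/M^2)$ remainder whose contour integral contributes negligibly after multiplication by $M^2\cdot(1/M)=M$; wait — more carefully, I multiply the log by $M$, not $M^2$, since one factor of $M$ is already used to normalize the Stieltjes transform fluctuation, so the remainder is $M\cdot O(1/M^2)=O(1/M)\to 0$. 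Hence
$$
M(\hat{\rho}_k-\rho_k) = \frac{M}{2\pi i N_k}\oint_{\mathcal C_k} \frac{M\big(m_{\hat{\underline{\bf R}}_N}(z)-\underline{m}_N(z)\big)}{\underline{m}_N(z)}\,dz + o_P(1),
$$
and since $M/N_k\to 1/c_k$ and $\underline{m}_N\to\underline{m}$ uniformly on $\mathcal C_k$, the right-hand side is a continuous (linear) functional of the process $M\big(m_{\hat{\underline{\bf R}}_N}-\underline{m}_N\big)$. By the continuous mapping theorem applied to this linear functional of a tight sequence of analytic processes, the joint law of $\big(M(\hat{\rho}_k-\rho_k)\big)_{1\le k\le L}$ converges to that of
$$
\left( \frac{1}{2\pi i\, c_k}\oint_{\mathcal C_k} \frac{G(z)}{\underline{m}(z)}\,dz,\ 1\le k\le L\right),
$$
which is a centered Gaussian vector because $G$ is a centered Gaussian process and contour integration is linear. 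Its covariance is then
$$
\Theta_{k\ell} = -\frac{1}{4\pi^2 c_k c_\ell}\oint_{\mathcal C_k}\oint_{\mathcal C_\ell} \frac{\mathrm{Cov}\big(G(z_1),G(z_2)\big)}{\underline{m}(z_1)\,\underline{m}(z_2)}\,dz_1\,dz_2\ ,
$$
and substituting the known Bai--Silverstein covariance kernel for $G$ and performing the residue evaluation yields the closed form \eqref{eq:def-theta}. Reality of the limiting vector follows because each $\hat{\rho}_k$ is real (it is given by \eqref{eq:Mestre_tk}) and one may take conjugate-symmetric contours.

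The main obstacle, and the place where most of the work lies, is the justification of the interchange of limits needed to pass from the CLT for the resolvent evaluated at fixed points off the real axis to a CLT for the contour integral: one needs tightness of $M\big(m_{\hat{\underline{\bf R}}_N}(\cdot)-\underline{m}_N(\cdot)\big)$ as a random element of the space of analytic functions on a neighbourhood of each $\mathcal C_k$, together with the almost-sure absence of eigenvalues of $\hat{\underline{\bf R}}_N$ outside the clusters (exact separation, in the spirit of Bai--Silverstein), so that the contour can be fixed independently of $N$ and the functions are genuinely analytic there with $\underline{m}_N$ bounded away from $0$. A secondary but nontrivial obstacle is the explicit residue computation that turns the double-contour integral of the covariance kernel into the stated formula \eqref{eq:def-theta}; this is a lengthy but in-principle routine complex-analysis calculation, exploiting the functional equation for $\underline{m}$ to rewrite the kernel, and I would relegate its details to the appendix. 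Handling the joint (multivariate) statement is essentially free once the one-dimensional case is done, since the vector is obtained by integrating the \emph{same} Gaussian process $G$ over the $L$ disjoint contours, and joint Gaussianity is automatic.
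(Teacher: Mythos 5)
Your argument is correct in substance but follows a genuinely different reduction from the paper's. Both proofs rest on the same pillars: the Bai--Silverstein-type CLT for the process $M\big(m_{\hat{\underline{\bf R}}_N}-\underline{m}_N\big)$ on contours (finite-dimensional convergence plus tightness, which in turn requires the no-eigenvalue/exact-separation control of Proposition \ref{prop:eigen-outside-bulk}), and the continuous mapping theorem. The difference lies in how $M(\hat\rho_k-\rho_k)$ is expressed as a functional of that process. The paper keeps an exact algebraic identity and works with the four-component process $(X_N,X_N',u_N,u_N')$, so it must also establish the CLT and tightness for the derivative process $X_N'=M\big(m_{\hat{\underline{\bf R}}_N}'-\underline{m}_N'\big)$, and its covariance computation produces four terms in $\kappa$, $\partial_1\kappa$, $\partial_2\kappa$, $\partial_{12}^2\kappa$ that are only afterwards reduced to \eqref{eq:def-theta} by integrations by parts. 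You instead integrate by parts pathwise, writing the integrand as $\log\big(m_{\hat{\underline{\bf R}}_N}/\underline{m}_N\big)$, and linearize the logarithm; since $\sup_{\mathcal C_k}\big|m_{\hat{\underline{\bf R}}_N}-\underline{m}_N\big|=O_P(1/M)$ by tightness, the remainder is $\tfrac{M^2}{N_k}\,O_P(1/M^2)=o_P(1)$ (your self-correction on this bookkeeping is right), so only the process $X_N$ itself enters, the derivative process is never needed, and the covariance of the limiting linear functional is \eqref{eq:def-theta} \emph{immediately} --- no further residue evaluation is required, contrary to what you suggest at that point. What your route buys is a shorter proof (no $X_N'$, no kernel-level integration by parts); what it costs is two justifications you only sketch: (i) the vanishing of the boundary term, i.e.\ single-valuedness of $\log\big(m_{\hat{\underline{\bf R}}_N}/\underline{m}_N\big)$ along the closed contour, which does not follow merely from analyticity on a neighbourhood of $\mathcal C_k$ (that neighbourhood is not simply connected) but does follow on the event $\sup_{\mathcal C_k}\big|m_{\hat{\underline{\bf R}}_N}/\underline{m}_N-1\big|<1$, whose probability tends to one by the same tightness and no-eigenvalue estimates; and (ii) a sign is dropped after the integration by parts, harmless for the covariance (it cancels in the quadratic form) but worth fixing. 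With these points made explicit, your proof is a valid, and arguably leaner, alternative to the paper's argument.
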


\begin{figure*}[t!]
  \begin{eqnarray}
    \label{eq:def-theta}
    \Theta_{k\ell} \quad = \quad -\frac1{4\pi^2 c_{k}c_{\ell}}\oint_{\mathcal C_{k}}\oint_{\mathcal C_{\ell}}
\left[ \frac{\underline{m}'(z_1)\underline{m}'(z_2)}{(\underline{m}(z_1)-\underline{m}(z_2))^2}
-\frac1{(z_1-z_2)^2} \right] \frac1{\underline{m}(z_1)\underline{m}(z_2)}dz_1 dz_2\ .
  \end{eqnarray}
    \hrulefill
\end{figure*}

\subsection{Proof of Theorem \ref{th:CLT}}
We first outline the main steps of the proof and then provide the details.

Using the integral representation of $\hat{\rho}_k$ and $\rho_k$, we get: Almost surely,
\begin{eqnarray*}
  M(\hat{\rho}_k - \rho_k) &=& \frac{M^2}{2\pi i N_k}\oint_{\mathcal C_k} z \left( \frac{m_{\hat{\underline{\bf R}}_N}'(z)}{m_{\hat{\underline{\bf R}}_N}(z)}  - \frac{\underline{m}_N'(z)}
    {\underline{m}_N(z)}\right) \,dz
\end{eqnarray*}
Denote by $C({\mathcal C}_k,\mathbb{C})$ the set of continuous
functions from ${\mathcal C}_k$ to $\mathbb{C}$ endowed with the
supremum norm $\|u\|_{\infty} = \sup_{{\mathcal C}_k}|u|$. Consider
the process:
$$
(X_N,X'_N,u_N,u'_N):{\mathcal C}_k \to \mathbb{C}^4
$$
where
\begin{eqnarray*}
X_N(z)&=& M\left(m_{\hat{\underline{\bf R}}_N}(z) -
\underline{m}_N(z)\right)\ ,\\
X'_N(z)&=& M\left(m_{\hat{\underline{\bf R}}_N}'(z) -
\underline{m}_N'(z)\right)\ ,\\
u_N(z)&=&m_{\hat{\underline{\bf R}}_N}(z)\ ,\quad u_N'(z) \ =\ m_{\hat{\underline{\bf R}}_N}'(z)\ .
\end{eqnarray*}
Then due to 'no eigenvalue' result (cf. \cite{SIL98}, see also Proposition \ref{prop:eigen-outside-bulk}), $(X_N,X'_N,u_N,u'_N)$ almost surely belongs to $C({\mathcal C}_k,\mathbb{C})$ and
$M(\hat{\rho}_k - \rho_k)$ writes:
\begin{eqnarray*}
M(\hat{\rho}_k - \rho_k) & =& \frac{M}{2\pi i N_k}\oint_{\mathcal C_k}
z \left( \frac{\underline{m}_N(z) X'_N(z) - u'_N(z) X_N(z)}{\underline{m}_N(z) u_N(z)}\right) \, dz\\
&\stackrel{\triangle}=& \Upsilon_N(X_N,X'_N,u_N,u'_N)\ ,
\end{eqnarray*}
where
\begin{equation}\label{eq:def-upsilon}
\Upsilon_N(x,x',u,u')\ =\ \frac{M}{2\pi i N_k}\oint_{\mathcal C_k}
z \left( \frac{\underline{m}_N(z) x'(z) - u'(z) x(z)}{\underline{m}_N(z) u(z)}\right) \, dz\ .
\end{equation}
If needed, we shall explicitly indicate the dependence in the contour ${\mathcal C}_k$ and write
$\Upsilon_N(x,x',u,u',{\mathcal C}_k)$.
The main idea of the proof of the theorem lies in three steps:
\begin{enumerate}
\item[(i)] To
prove the convergence in distribution of the process
$(X_N,X'_N,u_N,u'_N)$ to a Gaussian process.
\item[(ii)] To transfer this
convergence to the quantity $\Upsilon_N(X_N,X'_N,u_N,u'_N)$ with the
help of the continuous mapping theorem \cite{KAL02}.
\item[(iii)] To check that the limit (in distribution) of
  $\Upsilon_N(X_N,X'_N,u_N,u'_N)$ is Gaussian and to compute the
  limiting covariance between $\Upsilon_N(X_N,X'_N,u_N,u'_N,{\mathcal C}_k)$ and
  $\Upsilon_N(X_N,X'_N,u_N,u'_N,{\mathcal C}_\ell)$.
\end{enumerate}

\begin{remark}
Note that the convergence in step (i) is a distribution convergence
at a process level, hence one has to first establish the finite
dimensional convergence of the process and then to prove that the
process is tight over ${\mathcal C}_k$. Tightness turns out to be
difficult to establish due to the lack of control over the eigenvalues
of $\hat{\underline{\bf R}}_N$ whenever the contour crosses the real line.
In order to circumvent this issue, we shall introduce, following Bai
and Silverstein \cite{BAI04}, a process that approximates $X_N$ and $X'_N$.
\end{remark}

Let us now start the proof of Theorem \ref{th:CLT}.

\begin{lemma}\label{lemma:convergence-approx} Under the assumptions (A1)-(A2), the process
$$
(X_N,X'_N):{\mathcal C}_k \to \mathbb{C}^4
$$
converges in distribution to a Gaussian process $(X,Y)$
with mean function zero and covariance function:
\begin{eqnarray}
  \cov(X(z), X(\tilde z))&=& \frac{\underline{m}'(z) \underline{m}'(\tilde z)}{(\underline{m}(z) - \underline{m}(\tilde z) )^2} - \frac 1{(z -\tilde z)^2}\ \stackrel{\triangle}= \ \kappa(z,\tilde z)\ ,\label{eq:def-kappa}\\
 \cov(Y(z), X(\tilde z))&=& \frac{\partial}{\partial z} \kappa(z,\tilde z)\ ,\nonumber \\
\cov(X(z), Y(\tilde z))& =&  \frac{\partial}{\partial \tilde z} \kappa(z,\tilde z)\ , \nonumber \\
\cov(Y(z), Y(\tilde z)) & =& \frac{\partial^2}{\partial z\partial \tilde z} \kappa(z,\tilde z) \nonumber\ .
\end{eqnarray}
\end{lemma}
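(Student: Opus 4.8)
The plan is to follow the Bai--Silverstein methodology for CLTs of linear spectral statistics (cf. \cite{BAI04}), lifted to the level of an analytic process on the contour. The key preliminary remark is that, on any realization, $X_N$ is holomorphic in $z$ on a fixed neighbourhood of $\mathcal{C}_k$ that avoids $\mathrm{eig}(\hat{\underline{\bf R}}_N)$, and $X'_N$ is its complex derivative; by Cauchy's formula $X'_N(z)=\frac{1}{2\pi i}\oint \frac{X_N(w)}{(w-z)^2}\,dw$ along a small circle around $z$. Consequently, once the convergence in distribution of $X_N$ alone is established as a random element of the space of functions holomorphic near $\mathcal{C}_k$ (with uniform convergence on compact subsets), the joint convergence of $(X_N,X'_N)$ follows for free, the limit of $X'_N$ equals $Y=X'$, and the three covariance identities involving $Y$ are obtained simply by differentiating $\kappa(z,\tilde z)$ under the integral sign. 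So the whole statement reduces to the CLT for the single analytic process $X_N$, plus tightness.

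First I would split $X_N(z)=\big(\tr(\hat{\underline{\bf R}}_N-zI_M)^{-1}-\mathbb{E}\tr(\hat{\underline{\bf R}}_N-zI_M)^{-1}\big)+\big(\mathbb{E}\tr(\hat{\underline{\bf R}}_N-zI_M)^{-1}-M\underline{m}_N(z)\big)$. For complex Gaussian entries one has $\mathbb{E}X_{11}^2=0$ and vanishing fourth cumulant, so the deterministic bias term converges to zero uniformly on $\mathcal{C}_k$ (this is exactly the reason for comparing with the finite-$N$ deterministic equivalent $\underline{m}_N$ rather than with $\underline{m}$); this is shown by the usual resolvent identities together with the concentration of $m_{\hat{\underline{\bf R}}_N}$. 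For the centered random part I would write $\tr(\hat{\underline{\bf R}}_N-zI_M)^{-1}-\mathbb{E}\tr(\hat{\underline{\bf R}}_N-zI_M)^{-1}=\sum_{j=1}^M\gamma_j(z)$ with $\gamma_j(z)=(\mathbb{E}_j-\mathbb{E}_{j-1})\tr(\hat{\underline{\bf R}}_N-zI_M)^{-1}$, where $\mathbb{E}_j$ denotes conditioning on the first $j$ samples ${\bf x}_1,\dots,{\bf x}_j$; this is a sum of martingale differences. Applying the martingale CLT (e.g. \cite{KAL02}), the limiting covariance of the finite-dimensional marginals is computed by evaluating $\sum_j\mathbb{E}_{j-1}\big[\gamma_j(z)\overline{\gamma_j(\tilde z)}\big]$ (and the analogue without conjugation) via rank-one (Sherman--Morrison) perturbation identities, which after use of the fixed-point equation defining $\underline{m}$ produces precisely the kernel $\kappa(z,\tilde z)$ of \eqref{eq:def-kappa}; the Lyapunov condition follows from standard bounds on moments of quadratic forms in Gaussian vectors.

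The main obstacle, and the reason the Remark announces an approximating process, is tightness of $X_N$ over $\mathcal{C}_k$: the contour must cross the real axis, and there is no a priori bound on $\|(\hat{\underline{\bf R}}_N-zI_M)^{-1}\|$ when $\Im z$ is small and $z$ falls in a gap between two clusters. I would resolve this exactly as in \cite{BAI04}: keep the vertical portions of $\mathcal{C}_k$ at fixed distance from $\mathbb{R}$ and truncate the horizontal portions at height $\pm M^{-1}$, obtaining a regularized process $\breve X_N$ that coincides with $X_N$ outside an $M^{-1}$-neighbourhood of the real segments. The ``no eigenvalues outside the support'' result (Proposition \ref{prop:eigen-outside-bulk}, after \cite{SIL98}) guarantees that almost surely, for $N$ large, no eigenvalue of $\hat{\underline{\bf R}}_N$ lies in the inter-cluster gaps, so that the resolvent is in fact bounded along $\mathcal{C}_k$, the value of the contour integral is unchanged by the truncation, and $\sup_{\mathcal{C}_k}|X_N-\breve X_N|\to 0$ in probability. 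On the truncated contour one obtains the uniform increment bound $\mathbb{E}|\breve X_N(z_1)-\breve X_N(z_2)|^2\le K|z_1-z_2|^2$, which together with the finite-dimensional convergence gives tightness in $C(\mathcal{C}_k,\mathbb{C})$ and hence the asserted process convergence $X_N\xrightarrow{\mathcal D}X$; the joint convergence of $(X_N,X'_N)$ to $(X,Y)$ and the covariance formulae then follow by the analyticity argument of the first paragraph.
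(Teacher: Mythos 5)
Your overall architecture coincides with the paper's: finite-dimensional convergence through the Bai--Silverstein CLT machinery (the paper simply invokes Proposition \ref{prop:baisil} with the test functions $f(x)=(x-z)^{-1}$ and $f(x)=(x-z)^{-2}$ rather than re-running the martingale decomposition you sketch), and tightness via a truncation of the contour near the real axis combined with the polynomial escape bound of Proposition \ref{prop:eigen-outside-bulk}. The genuine divergence---and the place where your argument as written has a gap---is the treatment of $X_N'$. You propose to obtain $(X_N,X_N')$ ``for free'' from the convergence of $X_N$ alone via Cauchy's formula, which is legitimate only if $X_N$ is shown to converge as a random element of the space of functions holomorphic on a fixed two-dimensional neighbourhood of ${\mathcal C}_k$, uniformly on compacts. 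But the tightness you actually establish---finite-dimensional limits plus the increment bound $\mathbb{E}|\breve X_N(z_1)-\breve X_N(z_2)|^2\le K|z_1-z_2|^2$ on the truncated contour---only yields convergence in $C({\mathcal C}_k,\mathbb{C})$, and sup-norm convergence on the one-dimensional contour neither controls $X_N'$ nor permits the Cauchy integral over small circles centred at points of ${\mathcal C}_k$, since those circles leave the contour (and near the real-axis crossings they contain points where the resolvent is only almost-surely eventually bounded). To close this you must either redo the moment/truncation estimates on a neighbourhood (or a family of nearby contours), or do what the paper does: include $(x-z)^{-2}$ among the test functions to get the finite-dimensional limits of $X_N'$, and prove tightness of the derivative process separately (a second Nash--Poincar\'e computation with the kernel ${\bf D}_N^{-2}(z_1){\bf D}_N^{-2}(z_2)({\bf D}_N(z_1)+{\bf D}_N(z_2))$, plus boundedness and equicontinuity of the derivative of the bias term following \cite[Section 9.11]{SIL06}). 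Once that is done, the covariance identities for $Y$ indeed follow by differentiating $\kappa$ in \eqref{eq:def-kappa}.

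A secondary, easily repaired slip concerns the truncation parameter. Cutting at height $\pm M^{-1}$ is not small enough: on the good event the discrepancy between the process and its truncated version on the excised segments is of order $M\times(\text{height})$, hence $O(1)$ for height $M^{-1}$, so neither $\sup_{{\mathcal C}_k}|X_N-\breve X_N|\to 0$ nor the vanishing of the corresponding contour-integral error follows. One needs height $o(M^{-1})$, e.g. $N^{-1}\varepsilon_N$ with $\varepsilon_N\to 0$ and $\varepsilon_N\ge N^{-\delta}$, exactly as in \cite{BAI04} and in the paper's tightness appendix (also, it is the vertical pieces of ${\mathcal C}_k$, which cross the real axis in the spectral gaps, that get truncated, not the horizontal ones). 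Your remaining ingredients---vanishing of the deterministic bias for complex Gaussian entries, confinement of the eigenvalues via Proposition \ref{prop:eigen-outside-bulk}, martingale CLT for the finite-dimensional limits---are correct and match the paper's (or \cite{BAI04}'s) proof.
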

Lemma \ref{lemma:convergence-approx} is the cornerstone to the proof
of Theorem \ref{th:CLT}. 	The proof of Lemma \ref{lemma:convergence-approx}
is postponed to Appendix \ref{app:convergence-approx} and relies on
the following proposition, of independent interest:

\begin{proposition}\label{prop:eigen-outside-bulk} Under the assumptions (A1)-(A2)
  and denote by ${\mathcal S}$ the support of the
  probability distribution associated to the Stieltjes transform
  $m$. Then, for every $\varepsilon>0$, $\ell \in \mathbb{N}^*$:
$$
  \mathbb{P}\left( \sup_{\lambda \in \mathrm{eig}({\bf \hat R_N})}
    d(\lambda,{\mathcal S}) >\varepsilon \right) = {\mathcal O}\left(
    \frac 1{N^{\ell}}\right) \ ,
$$
where $d(\lambda,{\mathcal S}) = \inf_{x\in {\mathcal S}} |\lambda - x|$.
\end{proposition}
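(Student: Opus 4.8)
The plan is to establish the "no eigenvalues outside the support" phenomenon by controlling the probability that the resolvent $(\hat{\bf R}_N - z{\bf I}_N)^{-1}$ becomes large for $z$ in a neighborhood of the complement of ${\mathcal S}$, and then upgrading this to a uniform statement over all eigenvalues via a covering argument. Concretely, since $\hat{\bf R}_N = \frac1M {\bf R}_N^{1/2}{\bf X}_N{\bf X}_N^H{\bf R}_N^{1/2}$ has the same nonzero spectrum as $\hat{\underline{\bf R}}_N = \frac1M{\bf X}_N^H{\bf R}_N{\bf X}_N$, it suffices to work with the model most convenient for concentration estimates. The key quantity to bound is, for a fixed real $x$ at distance $>\varepsilon$ from ${\mathcal S}$, the difference $m_{\hat{\bf R}_N}(x) - m_N(x)$ together with a lower bound on $|\mathrm{Im}\, m_N(x+iy)|$ or a control of $\mathrm{Im}$-free type showing $x$ is outside the support of $F_N$ for all large $N$ (this uses Assumption (A2), the separability condition, which guarantees the limiting support splits into exactly $L$ clusters and that the finite-$N$ support $F_N$ also consists of $L$ well-separated clusters).

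First I would recall the machinery of Bai and Silverstein (their paper on no eigenvalues outside the support, reference \cite{SIL98}) and Bai--Silverstein \cite{BAI04}: the central estimate is that for any fixed $z$ with $\mathrm{Im}\, z$ possibly zero but $z$ bounded away from the support, and for any $\ell$, one has $\mathbb{E}\,|m_{\hat{\bf R}_N}(z) - \mathbb{E}\, m_{\hat{\bf R}_N}(z)|^{2\ell} = {\mathcal O}(N^{-2\ell})$ by the martingale difference decomposition (writing $m_{\hat{\bf R}_N} - \mathbb{E}\,m_{\hat{\bf R}_N}$ as a sum of martingale differences $\mathbb{E}_j - \mathbb{E}_{j-1}$ over the columns of ${\bf X}_N$, using Burkholder's inequality, and the rank-one perturbation bound on the resolvent). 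Combined with the bias estimate $|\mathbb{E}\, m_{\hat{\bf R}_N}(z) - m_N(z)| = {\mathcal O}(N^{-1})$, and the fact that $m_N$ is real and bounded for $x$ outside the (finite-$N$) support while any eigenvalue $\lambda_i$ at distance $\delta$ from $x$ contributes a term $1/(\lambda_i - x)$ of modulus $\ge 1/\delta$ but more importantly a large imaginary part if we perturb $x$ to $x + i\delta$ — the standard trick being to evaluate at $z = x + i N^{-\alpha}$ and note that an eigenvalue within $N^{-\alpha}$ of $x$ forces $\mathrm{Im}\, m_{\hat{\bf R}_N}(z) \ge c N^{\alpha}/N$, which for suitable $\alpha$ contradicts the ${\mathcal O}(N^{-1})$-type concentration once $\ell$ is taken large enough. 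Then a union bound over an $N^{-\beta}$-net of the compact region $\{x : d(x,{\mathcal S}) \in [\varepsilon, K]\}$ (the eigenvalues are almost surely bounded by $\|{\bf R}_N\|(1+\sqrt c)^2 + o(1)$, which handles the far end) plus Lipschitz control of the resolvent between net points yields the claimed $\mathbb{P}(\sup_{\lambda} d(\lambda, {\mathcal S}) > \varepsilon) = {\mathcal O}(N^{-\ell})$ for every $\ell$.

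The main obstacle, and the place where Assumption (A2) genuinely enters, is ensuring that the \emph{finite-$N$} support described by $F_N$ (equivalently, the set where $\underline{m}_N$ has the appropriate sign behaviour, via the Silverstein--Choi characterization of the support in terms of the real function $x \mapsto \underline{m}_N$ on the real line outside ${\bf R}_N$'s spectrum) is, for all large $N$, contained in an $\varepsilon/2$-neighborhood of the limiting ${\mathcal S}$ and splits into exactly $L$ clusters. This is where one invokes the separability condition $\inf_N\{\frac MN - \Psi_N(i)\} > 0$: it guarantees that the edges of the finite-$N$ clusters converge to the edges of the limiting clusters and stay uniformly separated, so that no "spurious" piece of support can appear near $x$ with $d(x,{\mathcal S})>\varepsilon$. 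Technically this amounts to a continuity/stability argument for the solutions of the fixed-point equation defining $\underline{m}_N$ and for the critical points of its functional inverse $z(\underline{m}) = -\underline{m}^{-1} + \frac NM \int \frac{t\,dF^{{\bf R}_N}(t)}{1+t\underline{m}}$, uniformly in $N$ under (A1)--(A2).

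Finally I would assemble the pieces: fix $\varepsilon>0$ and $\ell$; choose $\alpha, \beta$ and the martingale-moment order appropriately as functions of $\ell$; on the good event (complement of probability ${\mathcal O}(N^{-\ell})$) the concentration of $m_{\hat{\bf R}_N}(x+iN^{-\alpha})$ around the real, bounded $m_N(x+iN^{-\alpha})$ holds simultaneously over the net; deduce $\mathrm{Im}\, m_{\hat{\bf R}_N}(x+iN^{-\alpha})$ is small over the net, hence (by the Lipschitz bound and the explicit lower bound $\mathrm{Im}\, m_{\hat{\bf R}_N}(x+iN^{-\alpha}) \ge \tfrac1N \sum_i \frac{N^{-\alpha}}{(\lambda_i-x)^2 + N^{-2\alpha}}$) no eigenvalue lies within distance $\varepsilon$ of ${\mathcal S}$ on that event. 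The boundedness of the spectrum, needed to make the net finite, follows from the largest-eigenvalue bound of Bai--Silverstein (or Yin--Bai--Krishnaiah), which is itself of the required ${\mathcal O}(N^{-\ell})$ precision. This completes the proof of Proposition~\ref{prop:eigen-outside-bulk}.
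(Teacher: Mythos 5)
Your route is the Bai--Silverstein ``no eigenvalues outside the support'' strategy of \cite{SIL98}: concentration of the Stieltjes transform on a fine grid slightly off the real axis, detection of stray eigenvalues through $\Im\, m_{\hat{\bf R}_N}$, an operator-norm tail bound to truncate the grid, and stability of the finite-$N$ support under (A2). That is genuinely different from the paper's proof, which never approaches the real axis: it takes a smooth function $\phi$ vanishing on an $\varepsilon/3$-neighborhood of ${\mathcal S}$, bounds $\mathbb{P}(\sup_{\lambda}d(\lambda,{\mathcal S})>\varepsilon)$ by $\mathbb{P}(\|\hat{\bf R}_N\|>\tau-\varepsilon)+\mathbb{E}[\mathrm{Tr}\,\phi(\hat{\bf R}_N)]^{2^k}$, and controls the $2^k$-th moments by induction using the Poincar\'e--Nash inequality (where the Gaussian structure enters), H\"older's inequality and an iteration on the exponents. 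Your ``main obstacle'' paragraph correctly isolates the support-stability input, which is exactly the paper's Proposition~\ref{prop:support}, proved via the Silverstein--Choi characterization; on that point you are on target.

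The quantitative core of your sketch, however, does not close as written. The asserted bound $\mathbb{E}\,|m_{\hat{\bf R}_N}(z)-\mathbb{E}\,m_{\hat{\bf R}_N}(z)|^{2\ell}={\mathcal O}(N^{-2\ell})$ for $z$ on or near the real axis but away from ${\mathcal S}$ is circular: the rank-one perturbation bound controls each martingale increment only by $C/(M\,\Im z)$, and replacing $\Im z$ by $d(z,\mathrm{eig}(\hat{\bf R}_N))$ would presuppose precisely the absence of eigenvalues near $z$ that you are trying to prove. At $z=x+\mathbf{i}N^{-\alpha}$, Burkholder/Azuma with these increments only gives fluctuations of typical size $N^{\alpha-1/2}$, whereas the signal of an eigenvalue within $N^{-\alpha}$ of $x$ is $\Im\, m_{\hat{\bf R}_N}\gtrsim N^{\alpha-1}$, and this signal must also dominate the deterministic background $\Im\, m_N(x+\mathbf{i}N^{-\alpha})\asymp N^{-\alpha}$, which forces $\alpha>1/2$. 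Since $N^{\alpha-1}\ll N^{\alpha-1/2}$ for every $\alpha$, the single-point imaginary-part test is swamped by the noise permitted by the crude martingale bound, no matter how the mesh, $\alpha$ and the moment order are tuned in terms of $\ell$. Making this route rigorous requires the genuinely finer apparatus of \cite{SIL98}: conditional-variance estimates in which the increments are controlled through $\Im\, m/(Nv)$ rather than $1/(Nv)^2$, a moderate choice of $v_N$, and a final comparison argument that is not the single-point $\Im$ detection you describe --- in effect importing the full proof of \cite{SIL98} and checking that it yields the ${\mathcal O}(N^{-\ell})$ probability bound stated here. The paper sidesteps all of this by the smooth-test-function/Poincar\'e--Nash induction, which only needs the norm tail bound and $\mathbb{E}\,\mathrm{Tr}\,f(\hat{\bf R}_N)={\mathcal O}(N^{-1})$ for smooth $f$ vanishing near ${\mathcal S}_N$.
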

The proof of Proposition \ref{prop:eigen-outside-bulk} is postponed to
Appendix \ref{app:eigen-outside-bulk}.

As $(u_N,u_N') \xrightarrow[N,M\to \infty]{a.s.}
(\underline{m},\underline{m}')$, a straightforward corollary of Lemma
\ref{lemma:convergence-approx} yields the convergence in distribution
of $(X_N,X'_N,u_N,u_N')$ to $(X,Y,\underline{m},\underline{m}')$. This
concludes the proof of step (i).

A direct consequence of Lemma \ref{lemma:convergence-approx}
yields that $ (X_N,X'_N,u_N,u'_N):{\mathcal C}_k \to \mathbb{C}^4 $
converges in distribution to the Gaussian process $(X,Y,\underline{m},\underline{m}')$
defined as before.  We are now in
position to transfer the convergence of $(X_N,X'_N,u_N,u'_N)$ to
$\Upsilon_N(X_N,X'_N,u_N,u'_N)$ via the continuous mapping theorem,
whose statement as expressed in \cite{KAL02} is reminded below.
\begin{proposition}[cf. {\cite[Th. 4.27]{KAL02}}]
  \label{th:CMT}
  For any metric spaces $S_1$ and $S_2$, let $\xi$, $ (\xi_n)_{n\ge
  1}$ be random elements in $S_1$ with $\xi_n \xrightarrow[n\to
  \infty]{\mathcal D} \xi$ and consider some measurable mappings
  $f$, $(f_n)_{n\ge 1}$: $S_1 \mapsto S_2$ and a measurable set $\Gamma \subset S_1$
  with $\xi \in \Gamma$ a.s. such that $f_n(s_n) \rightarrow f(s)$ as $s_n
  \rightarrow s \in \Gamma$. Then $f_n(\xi_n) \xrightarrow[n\to
  \infty]{\mathcal D} f(\xi)$.
\end{proposition}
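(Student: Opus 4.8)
The statement is the \emph{extended continuous mapping theorem}, quoted from \cite[Th. 4.27]{KAL02}; we recall the standard argument, which goes through the Skorokhod coupling. The plan is to realize the weak convergence $\xi_n\xrightarrow{\mathcal D}\xi$ as an almost sure convergence on an auxiliary probability space, push it through the (possibly discontinuous and $n$-dependent) maps $f_n$ using only the hypothesis ``$f_n(s_n)\to f(s)$ as $s_n\to s\in\Gamma$'', and then read off the distributional convergence of $f_n(\xi_n)$.

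Concretely, since $S_1$ is a metric space and, in all the situations where the proposition is applied here, separable, the Skorokhod representation theorem provides random elements $\tilde\xi,\tilde\xi_1,\tilde\xi_2,\ldots$ on a common probability space $(\tilde\Omega,\tilde{\mathcal F},\tilde{\mathbb P})$ with $\tilde\xi\stackrel{d}{=}\xi$, $\tilde\xi_n\stackrel{d}{=}\xi_n$ for each $n$, and $\tilde\xi_n\to\tilde\xi$ $\tilde{\mathbb P}$-a.s. Because $\xi\in\Gamma$ a.s.\ and $\tilde\xi\stackrel{d}{=}\xi$, we also have $\tilde\xi\in\Gamma$ a.s., so on the full-measure event $\{\tilde\xi\in\Gamma\}\cap\{\tilde\xi_n\to\tilde\xi\}$ the sequence $s_n:=\tilde\xi_n$ converges to $s:=\tilde\xi\in\Gamma$, and the hypothesis yields $f_n(\tilde\xi_n)\to f(\tilde\xi)$. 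Hence $f_n(\tilde\xi_n)\to f(\tilde\xi)$ $\tilde{\mathbb P}$-a.s.\ in $S_2$, and therefore in distribution. Finally, measurability of $f_n$ and $f$ gives $f_n(\tilde\xi_n)\stackrel{d}{=}f_n(\xi_n)$ and $f(\tilde\xi)\stackrel{d}{=}f(\xi)$, so the distributional limit transfers: $f_n(\xi_n)\xrightarrow{\mathcal D}f(\xi)$.

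The points requiring care — rather than genuine difficulty — are: (a) the appeal to Skorokhod's theorem, which needs separability of $S_1$ (or at least that the law of $\xi$ be carried by a separable subset); (b) measurability of the maps $f_n,f$ and of the set $\Gamma$, so that $f_n(\xi_n)$ is a bona fide $S_2$-valued random element and $\{\xi\in\Gamma\}$ is an event; and (c) the fact that \emph{no} continuity of the individual $f_n$ is used — only the sequential condition along sequences converging into $\Gamma$ — which is exactly what the pointwise argument on $\{\tilde\xi\in\Gamma\}$ exploits. It is also essential that the proposition allows the $f_n$ to vary with $n$: in the application of this article one takes $S_1=C(\mathcal C_k,\mathbb C^4)$, a separable Banach space, $S_2=\mathbb C$, $\Gamma$ an appropriate set of quadruples of functions on which the contour integral defining $\Upsilon_N$ is well defined, and $f_n=\Upsilon_N$, which depends on $N$ both through the factor $M/N_k$ and through the deterministic function $\underline m_N$; these converge respectively to $1/c_k$ and to $\underline m$ uniformly on $\mathcal C_k$, which is precisely what validates the condition $f_n(s_n)\to f(s)$ and hence the passage from Lemma \ref{lemma:convergence-approx} to Theorem \ref{th:CLT}.
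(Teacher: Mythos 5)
The paper does not actually prove this proposition: it is imported verbatim from Kallenberg \cite[Th.~4.27]{KAL02} and used as a black box, so there is no internal proof to compare yours against. Your Skorokhod-coupling argument is a correct and standard proof of the extended continuous mapping theorem, and all the intermediate claims (transfer of $\xi\in\Gamma$ a.s.\ to the coupled copy, pointwise application of the hypothesis on the full-measure event, a.s.\ convergence implying convergence in distribution, and equality in law of $f_n(\tilde\xi_n)$ and $f_n(\xi_n)$ by measurability of $f_n$) are justified. The one caveat, which you yourself flag, is that the Skorokhod--Dudley representation requires the limit law to have separable support, whereas the proposition is stated ``for any metric spaces''; the fully general statement is usually obtained instead by a portmanteau argument: for a closed set $F\subset S_2$ one sets $B_m=\overline{\bigcup_{n\ge m}f_n^{-1}(F)}$, notes that $\Gamma\cap\bigcap_m B_m\subset f^{-1}(F)$ by the sequential hypothesis and closedness of $F$, and concludes $\limsup_n\mathbb{P}(f_n(\xi_n)\in F)\le\mathbb{P}(f(\xi)\in F)$, with no separability needed. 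Since in this paper the proposition is only applied with $S_1=C(\mathcal C_k,\mathbb{C}^4)$ (a separable space, the contour being compact) and $S_2=\mathbb{C}$, your route is fully adequate for the use made of the result, and your closing remarks correctly identify why the hypothesis $f_n(s_n)\to f(s)$, $s\in\Gamma$, holds for $f_n=\Upsilon_N$ and $f=\Upsilon$ (uniform convergence of $\underline m_N$ to $\underline m$ on $\mathcal C_k$, $M/N_k\to 1/c_k$, and $\inf_{\mathcal C_k}|\underline m|>0$).
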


It remains to apply Theorem \ref{th:CMT} to the process
$(X_N,X'_N,u_N,u'_N)$ and to the function $\Upsilon_N$ as defined in
\eqref{eq:def-upsilon}. Denote by\footnote{As previously, we shall
  explicitly indicate the dependence on the contour ${\mathcal C}_k$
  if needed and write $\Upsilon(x,x',u,u',{\mathcal C}_k)$.}
$$
\Upsilon(x,y,v,w)\ =\ \frac{1}{2\pi i c_k}\oint_{\mathcal C_k}
z \left( \frac{\underline{m}(z) y(z) - w(z) x(z)}{\underline{m}(z) v(z)}\right) \, dz\ ,
$$
and consider the set
$$
\Gamma=\left\{ (x,y,v,w) \in C^4({\mathcal C}_k,\mathbb{C})\ ,\ \inf_{{\mathcal C}_k}|v|>0 \right\}\ .
$$
Then, it is shown in \cite[Section 9.12.1]{SIL06} that $\inf_{{\mathcal C}_k}|\underline{m}|>0 $,
and, by a dominated convergence theorem argument, that
$(x_N,y_N,v_N,w_N)\to (x,y,v,w) \in \Gamma$ implies that
$\Upsilon_N(x_N,y_N,v_N,w_N)\to \Upsilon(x,y,v,w)$. Therefore,
Theorem \ref{th:CMT} applies to $\Upsilon_N(x_N,y_N,v_N,w_N)$ and
the following convergence holds true:
$$
\Upsilon_N(X_N,X_N',u_N,u_N') \xrightarrow[M,N\to \infty]{\mathcal D}
\Upsilon(X,Y,\underline{m},\underline{m}')\ ,
$$
and step (ii) is established.

It now remains to prove step (iii), {\em i.e.} to check the
Gaussianity of the random variable
$\Upsilon(X,Y,\underline{m},\underline{m}')$ and to compute the
covariance between
$\Upsilon(X,Y,\underline{m},\underline{m}',{\mathcal C}_k)$ and
$\Upsilon(X,Y,\underline{m},\underline{m}',{\mathcal C}_\ell)$.

In order to propagate the Gaussianity of the deviations in the
integrands of \eqref{eq:hattk} to the deviations of the integral which
defines $\hat{\rho}_k$, it suffices to notice that the integral can be
written as the limit of a finite Riemann sum and that a finite Riemann
sum of Gaussian random variables is still Gaussian. Therefore
$M(\hat{\rho}_k-\rho_k)$ converges to a Gaussian distribution. As
$\inf_{z\in \mathcal C_k} |\underline{m}(z)| >0$, a straightforward
application of Fubini's theorem together with the fact that
$\mathbb{E} (X)=\mathbb{E} (Y)=0$ yields:
$$
\mathbb{E} \oint \left( z \frac{\underline{m}^{'}(z) X(z)}
{\underline{m}^2(z)}- z \frac{Y(z)}{\underline{m}(z)} \right) dz =0\ .
$$
It remains to compute the covariance between
$\Upsilon(X,Y,\underline{m},\underline{m}', {\mathcal C}_k)$ and
$\Upsilon(X,Y,\underline{m},\underline{m}', {\mathcal C}_\ell)$ for
possibly different contours ${\mathcal C}_k$ and ${\mathcal C}_\ell$.
We shall therefore evaluate, for $1\le k,\ell\le L$:
\begin{eqnarray*}
  \Theta_{k\ell} &=& \mathbb{E} \left( \Upsilon(X,Y,\underline{m},\underline{m}', {\mathcal C}_k)
    \Upsilon(X,Y,\underline{m},\underline{m}', {\mathcal C}_\ell)
  \right) \ ,  \\
  &\stackrel{(a)}=& -\frac{1}{4 \pi^2 c_k c_l}\oint_{{\mathcal C}_k} \oint_{{\mathcal C}_\ell} z_1 z_2 \Big{(}
  \frac{\underline{m}^{'}(z_1) \underline{m}^{'}(z_2) \kappa(z_1,z_2)}
  {\underline{m}^2 (z_1) \underline{m}^2(z_2)}- \frac{ \underline{m}^{'}(z_1) \partial_2 \kappa(z_1,z_2)}
  { \underline{m}^2 (z_1) \underline{m}(z_2)} \\ && \quad -\frac{\underline{m}^{'}(z_2) \partial_1 \kappa(z_1,z_2)}{\underline{m}(z_1) \underline{m}^2(z_2)} + \frac{\partial_{12}^2 \kappa(z_1,z_2)}{\underline{m} (z_1) \underline{m}(z_2)}\Big{)} dz_1 dz_2\ ,
\end{eqnarray*}
where $(a)$ follows from the fact that $\inf_{z \in \mathcal C_k}
|\underline{m}(z)| >0$ together with Fubini's theorem, and
$\partial_1, \partial_2, \partial_{12}^2$ respectively stand for
$\partial /\partial z_1$, $\partial /\partial z_2$ and  $\partial^2 /\partial z_1 \partial z_2$.

By integration by parts, we obtain

\begin{eqnarray*}
\lefteqn{\oint \frac{z_1 z_2 \underline{m}'(z_2)
\partial_1 \kappa(z_1,z_2) }{{\underline{m}(z_1)\underline{m}^2(z_2)}} dz_1 } \\
& =& \oint \left( -\frac{z_2 \underline{m}'(z_2)  \kappa(z_1,z_2)}{{\underline{m}(z_1)\underline{m}^2(z_2)}}+ \frac{z_1z_2\underline{m}'(z_1) \underline{m}'(z_2)  \kappa(z_1,z_2)} {\underline{m}^2(z_1)\underline{m}^2(z_2)} \right) dz_1\ .
\end{eqnarray*}
Similarly,
\begin{eqnarray*}
\lefteqn{\oint \frac{z_1 z_2 \underline{m}(z_2)  \partial_{1,2} \kappa(z_1,z_2) }{{\underline{m}(z_1)\underline{m}^2(z_2)}} dz_1 }\\
&=& -\oint \frac{z_2  \partial_2 \kappa(z_1,z_2)}{{\underline{m}(z_1)\underline{m}(z_2)}} dz_1 + \oint \frac{z_1z_2\underline{m}'(z_1)  \partial_2 \kappa(z_1,z_2)} {\underline{m}^2(z_1)\underline{m}(z_2)}dz_1\ .
\end{eqnarray*}
Hence
$$
\Theta_{k\ell} =  -\frac{1}{4 \pi^2 c_kc_l}\left\{ \oint_{{\mathcal C}_k} \oint_{{\mathcal C}_\ell} \frac{z_2 \underline{m}'(z_2)
\kappa(z_1,z_2) } {{\underline{m}(z_1)\underline{m}^2(z_2)}} dz_1 dz_2 - \oint_{{\mathcal C}_k}
\oint_{{\mathcal C}_\ell} \frac{z_2
\partial_2 \kappa(z_1,z_2) }{{\underline{m}(z_1)\underline{m}(z_2)}} dz_1 dz_2\right\}\ .
$$
Another integration by parts yields
$$
\oint \frac{z_2 \partial_2
  \kappa(z_1,z_2)}{{\underline{m}(z_1)\underline{m}(z_2)}} dz_2=
-\oint \frac{\kappa(z_1,z_2)}{{\underline{m}(z_1)\underline{m}(z_2)}}
dz_2 + \oint \frac{z_2 \underline{m}'(z_2)
  \kappa(z_1,z_2)}{{\underline{m}(z_1)\underline{m}^2(z_2)}} dz_2\ .
$$
Finally, we obtain:
$$
\Theta_{k\ell} = -\frac{1}{4 \pi^2 c_kc_l} \oint_{{\mathcal C}_k} \oint_{{\mathcal C}_\ell} \frac{\kappa(z_1,z_2)}
{\underline{m}(z_1)\underline{m}(z_2) } d\, z_1 d\, z_2\ ,
$$
and \eqref{eq:def-theta} is established.

\begin{figure*}[t!]
  \begin{equation}
  \label{eq:est}
  \hat{\Theta}_{k\ell} = -\frac{M^2}{4 \pi^2 N_k N_\ell}
\oint_{{\mathcal C}_k} \oint_{{\mathcal C}_\ell}
  \left(\frac{m_{\underline{\hat{\bf R}}_N}'(z_1) m_{\underline{\hat{\bf R}}_N}'(z_2)} {(m_{\underline{\hat{\bf R}}_N}(z_1)- m_{\underline{\hat{\bf R}}_N}(z_2))^2}-\frac{1}{(z_1-z_2)^2}\right) \times \frac{1}{m_{\underline{\hat{\bf R}}_N}(z_1) m_{\underline{\hat{\bf R}}_N}(z_2)} d\, z_1 d\, z_2\ .
    \end{equation}
    \hrulefill
\end{figure*}

\vspace{4mm}

\section{Estimation of the covariance matrix}\label{sec:estimation-cov}

Theorem \ref{th:CLT} describes the limiting performance of the
estimator of Theorem \ref{th:mestre}, with an exact
characterization of its variance. Unfortunately, the variance
$\boldsymbol{\Theta}$ depends upon unknown quantities. We provide
hereafter consistent estimates $\hat{\boldsymbol{\Theta}}$ for
$\boldsymbol{\Theta}$ based on the observations $\hat{\bf R}_N$.

\begin{theorem}
  \label{th:estimate} Assume that the assumptions (A1)-(A2) hold true, and recall the definition of $\Theta_{k\ell}$ given in
  \eqref{eq:def-theta}. Let $\hat{\Theta}_{k\ell}$ be defined by
  \eqref{eq:def-theta-hat}, where $({\mathcal N}_k)$ and $(\hat{\mu}_k)$ are
  defined in Theorem \ref{th:mestre}, then:
  \begin{equation*}
	  \hat{\Theta}_{k\ell} - \Theta_{k\ell} \xrightarrow[]{a.s.} 0
  \end{equation*}
as $N,M\to \infty$.
\end{theorem}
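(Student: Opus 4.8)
The plan is to keep the double-contour-integral representation \eqref{eq:est} of $\hat{\Theta}_{k\ell}$ and to compare its integrand directly with the one of \eqref{eq:def-theta}, equivalently with $\kappa(z_1,z_2)/(\underline m(z_1)\underline m(z_2))$ (cf.\ \eqref{eq:def-kappa}). The only discrepancies are the replacement of $\underline m,\underline m'$ by $m_{\hat{\underline{\bf R}}_N},m'_{\hat{\underline{\bf R}}_N}$ and of the constants $c_k,c_\ell$ by $N_k/M,N_\ell/M$; since $M^2/(N_kN_\ell)\to 1/(c_kc_\ell)$ under (A1), the statement will follow from a dominated-convergence argument over the finite-length contours ${\mathcal C}_k,{\mathcal C}_\ell\subset\mathbb{C}\setminus{\mathcal S}$ once we prove that, almost surely, $m_{\hat{\underline{\bf R}}_N}$ and $m'_{\hat{\underline{\bf R}}_N}$ converge uniformly on these curves to $\underline m$ and $\underline m'$, together with the uniform nonvanishing $\inf_{{\mathcal C}_k}|m_{\hat{\underline{\bf R}}_N}|>0$ for $N$ large.

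This uniform convergence is the crux. Applying Proposition \ref{prop:eigen-outside-bulk} with some $\ell\ge 2$ and the Borel--Cantelli lemma, almost surely there is $N_0$ such that for all $N\ge N_0$ every eigenvalue of $\hat{\bf R}_N$ lies within $\varepsilon$ of ${\mathcal S}$; the same then holds for the nonzero eigenvalues of $\hat{\underline{\bf R}}_N$, its remaining eigenvalues being equal to $0$, which is likewise bounded away from the clusters. Choosing $\varepsilon$ so that the contours stay at distance $\ge 2\varepsilon$ from ${\mathcal S}\cup\{0\}$, we obtain on this event the deterministic bounds $|m_{\hat{\underline{\bf R}}_N}(z)|=\tfrac1M|\tr(\hat{\underline{\bf R}}_N-z{\bf I}_M)^{-1}|\le\varepsilon^{-1}$ and $|m'_{\hat{\underline{\bf R}}_N}(z)|\le\varepsilon^{-2}$, uniformly for $z$ in a fixed neighbourhood of ${\mathcal C}_k$ and all $N\ge N_0$. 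Since moreover $m_{\hat{\underline{\bf R}}_N}(z)\xrightarrow{a.s.}\underline m(z)$ pointwise for each fixed $z\notin{\mathcal S}$ by \cite{SIL95}, the sequence $(m_{\hat{\underline{\bf R}}_N})_N$ is, on the good event, a locally bounded family of analytic functions converging on a set with an accumulation point; Vitali's (equivalently Montel's) theorem upgrades this to uniform convergence on compact subsets — in particular on ${\mathcal C}_k$ — and Cauchy's integral formula for the derivative propagates it to $m'_{\hat{\underline{\bf R}}_N}\to\underline m'$. Finally, $\inf_{{\mathcal C}_k}|\underline m|>0$ by \cite[Section 9.12.1]{SIL06}, so uniform convergence gives $\inf_{{\mathcal C}_k}|m_{\hat{\underline{\bf R}}_N}|\ge\tfrac12\inf_{{\mathcal C}_k}|\underline m|>0$ for $N$ large.

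One point to treat with care is the diagonal $\{z_1=z_2\}$: when $k=\ell$ (or, after any admissible deformation, whenever ${\mathcal C}_k$ and ${\mathcal C}_\ell$ meet) both $\kappa(z_1,z_2)$ and its empirical analogue have only a removable singularity there and therefore extend analytically across $\{z_1=z_2\}$, so the local-boundedness/Vitali argument above applies verbatim to the bracketed kernel seen as a function of two variables on a neighbourhood of ${\mathcal C}_k\times{\mathcal C}_\ell$, and uniform convergence of the full integrand follows there too. Dominated convergence then yields $\hat{\Theta}_{k\ell}-\Theta_{k\ell}\xrightarrow{a.s.}0$; as a by-product, evaluating the double integral \eqref{eq:est} by residue calculus — the singularities enclosed by ${\mathcal C}_k$ being the $\hat\lambda_m$, $m\in{\mathcal N}_k$ (poles of $m_{\hat{\underline{\bf R}}_N}$), and the zeros $\hat\mu_m$ of $m_{\hat{\underline{\bf R}}_N}$, exactly as in the passage from \eqref{eq:hattk} to \eqref{eq:Mestre_tk} — produces the explicit form \eqref{eq:def-theta-hat}. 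The main obstacle is precisely this uniform control of $m_{\hat{\underline{\bf R}}_N}$ and $m'_{\hat{\underline{\bf R}}_N}$ along the parts of the contours running between clusters, i.e.\ close to the real axis: that is what Proposition \ref{prop:eigen-outside-bulk} supplies, and it is its faster-than-$N^{-1}$ decay that makes Borel--Cantelli applicable, thereby upgrading the conclusion from convergence in probability to almost-sure convergence; everything else is routine complex analysis.
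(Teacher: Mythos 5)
Your first half --- the almost sure convergence of the contour--integral quantity \eqref{eq:est} to $\Theta_{k\ell}$ --- is correct, and your route (Borel--Cantelli applied to Proposition \ref{prop:eigen-outside-bulk} plus a normal-family/Vitali argument to get uniform convergence of $m_{\underline{\hat{\bf R}}_N}$ and $m_{\underline{\hat{\bf R}}_N}'$ on the contours, then dominated convergence) is a legitimate variant of the paper's, which instead cites the no-eigenvalue results \cite{SIL98,BAI99} and \cite[Lemma 1]{MES08b} to lower-bound $|m_{\underline{\hat{\bf R}}_N}|$ on ${\mathcal C}_k$ before applying dominated convergence.

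However, the theorem defines $\hat\Theta_{k\ell}$ by the explicit formula \eqref{eq:def-theta-hat}, not by the integral \eqref{eq:est}, so the identity between \eqref{eq:est} and \eqref{eq:def-theta-hat} is an indispensable half of the proof, and this is where your argument has a genuine gap. You treat it as a by-product obtained ``exactly as in the passage from \eqref{eq:hattk} to \eqref{eq:Mestre_tk}'', with the enclosed singularities being the $\hat\lambda_m$ and the $\hat\mu_m$. That is not what happens for the kernel $h(z_1,z_2)$ of \eqref{eq:integrand}: at $z_1=\hat\lambda_m$ the simple pole of $m_{\underline{\hat{\bf R}}_N}(z_1)$ cancels between $m_{\underline{\hat{\bf R}}_N}'(z_1)$ and $(m_{\underline{\hat{\bf R}}_N}(z_1)-m_{\underline{\hat{\bf R}}_N}(z_2))^{2}$ while $1/m_{\underline{\hat{\bf R}}_N}(z_1)$ vanishes, so the sample eigenvalues are removable singularities and contribute nothing; on the other hand the points $z_1\neq z_2$ with $m_{\underline{\hat{\bf R}}_N}(z_1)=m_{\underline{\hat{\bf R}}_N}(z_2)$ are genuine poles that you never mention, and the paper must show (their residue being an exact $z_2$-derivative, of the form $-k\,m_{\underline{\hat{\bf R}}_N}'(z_2)/m_{\underline{\hat{\bf R}}_N}^3(z_2)$) that their total contribution vanishes after the $z_2$-integration; the same issue arises at $z_1=z_2$ when $k=\ell$. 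Moreover, the diagonal terms of \eqref{eq:def-theta-hat}, namely $\frac{m_{\underline{\hat{\bf R}}_N}'''(\hat\mu_i)}{6\,m_{\underline{\hat{\bf R}}_N}'(\hat\mu_i)^3}-\frac{m_{\underline{\hat{\bf R}}_N}''(\hat\mu_i)^2}{4\,m_{\underline{\hat{\bf R}}_N}'(\hat\mu_i)^4}$, arise from a third-order pole at $z_2=\hat\mu_i$ of the residue function $r$ in \eqref{eq:definition-r}; nothing in your sketch produces them. As written, your proposal establishes consistency of the unevaluated integral \eqref{eq:est}, but not of the estimator \eqref{eq:def-theta-hat} that the theorem is actually about.
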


Theorem \ref{th:estimate} is useful in practice as one can obtain
simultaneously an estimate $\hat{\rho}_k$ of the values of $\rho_k$ as well
as an estimation of the degree of confidence for each $\hat{\rho}_k$.

\begin{figure*}[t!]
\begin{multline}
  \label{eq:def-theta-hat}
    \hat{\Theta}_{k\ell} \quad = \quad \frac{M^2}{
      N_{k}N_{\ell}}\left[ \sum_{(i,j)\in\mathcal N_{k}\times\mathcal
        N_{\ell},\ i \neq j}
      \frac{-1}{(\hat\mu_i-\hat\mu_j)^2 m_{\underline{\hat{\bf R}}_N}'(\hat\mu_i) m_{\underline{\hat{\bf R}}_N}'(\hat\mu_j)}\right. \\
    \left.+ \delta_{k\ell}\sum_{i\in\mathcal N_{k}}\left(
\frac{
        m_{\underline{\hat{\bf R}}_N}'''(\hat\mu_i)}{6
        m_{\underline{\hat{\bf R}}_N}'(\hat\mu_i)^3}-\frac{
        m_{\underline{\hat{\bf R}}_N}'' (\hat\mu_i)^2}{4
        m_{\underline{\hat{\bf R}}_N}'(\hat\mu_i)^4} \right) \right]\ .
  \end{multline}
    \hrulefill
\end{figure*}
\vspace{4mm}

\begin{proof} In view of formula \eqref{eq:def-theta}, and taking into
  account the fact that $m_{\underline{\hat{\bf R}}_N}$ and
  $m_{{\underline{\hat{\bf R}}_N}}'$ are consistent estimates for
  $\underline{m}$ and $\underline{m}'$, it is natural to define
  $\hat{\Theta}_{k\ell}$ by replacing the unknown quantities
  $\underline{m}$ and $\underline{m}'$ in \eqref{eq:def-theta} by
  their empirical counterparts $m_{\underline{\hat{\bf R}}_N}$ and
  $m_{{\underline{\hat{\bf R}}_N}}'$, hence the definition of $\hat{\Theta}_{k\ell}$ in \eqref{eq:est}.

  The proof of Theorem \ref{th:estimate} now breaks down into two
  steps: The convergence of $\hat{\Theta}_{k\ell}$ to
  $\Theta_{k\ell}$, which relies on the definition \eqref{eq:est} of
  $\hat{\Theta}_{k\ell}$ and on a dominated convergence argument, and
  the effective computation of the integral in \eqref{eq:est} which
  relies on Cauchy's residue theorem \cite{Mars87}, and yields \eqref{eq:def-theta-hat}.

  We first address the convergence of $\hat{\Theta}_{k\ell}$ to
  $\Theta_{k\ell}$. Due to \cite{SIL98,BAI99}, almost surely, the
  eigenvalues of ${\underline{\hat{\bf R}}_N}$ will eventually belong
  to any $\varepsilon$-blow-up of the support $\underline{\mathcal S}$
  of the probability measure associated to $\underline {m}$, {\it{i.e.}} the set $\{x \in \mathbb{R}: d(x, \underline{\mathcal S}) < \varepsilon\}$. Hence, if $\varepsilon$ is small enough, the distance between these
  eigenvalues and any $z\in {\mathcal C}_k$ will be eventually
  uniformly lower-bounded. By \cite[Lemma 1]{MES08b}, the same result
  holds true for the zeros of $m_{{\underline{\hat{\bf R}}_N}}$
  (which are real). In particular, this implies that
  $m_{{\underline{\hat{\bf R}}_N}}$ is eventually uniformly
  lower-bounded on ${\mathcal C}_k$ (if not, then by compacity, there
  would exist $z\in {\mathcal C}_k$ such that $m_{{\underline{\hat{\bf
          R}}_N}}(z)=0$ which yields a contradiction because all the
  zeroes of $m_{{\underline{\hat{\bf R}}_N}}$ are strictly within the
  contour). With these arguments at hand, one can easily apply the
  dominated convergence theorem and conclude that a.s.
  $\hat{\Theta}_{k\ell} \to \Theta_{k\ell}$.

  We now evaluate the integral \eqref{eq:est} by computing the
  residues of the integrand within ${\mathcal C}_k$ and ${\mathcal
    C}_\ell$. There are two cases to discuss depending on whether
  $k\neq\ell$ and $k= \ell$. Denote by $h(z_1,z_2)$ the integrand in
  \eqref{eq:est}, that is:

\begin{equation}\label{eq:integrand}
  h(z_1,z_2) = \left(\frac{m_{\underline{\hat{\bf R}}_N}'(z_1) m_{\underline{\hat{\bf R}}_N}'(z_2)} {(m_{\underline{\hat{\bf R}}_N}(z_1)- m_{\underline{\hat{\bf R}}_N}(z_2))^2}-\frac{1}{(z_1-z_2)^2}\right) \times \frac{1}{m_{\underline{\hat{\bf R}}_N}(z_1) m_{\underline{\hat{\bf R}}_N}(z_2)}.
\end{equation}

  We first consider the case where $k\neq \ell$.


  In this case, the two integration contours are different and it can
  be assumed that they never intersect (so it can always be assumed
  that $z_1\neq z_2$). Let $z_2$ be fixed, and denote by $\hat{\mu}_i$
  the zeroes (labeled in increasing order) of $m_{\underline{\hat{\bf
        R}}_N}$, then the computation of the
  residue $\mathrm{Res}(h(\cdot,z_2),\hat{\mu}_i)$ of $h(\cdot,z_2)$
  at a zero $\hat{\mu}_i$ of $m_{\underline{\hat{\bf R}}_N}$ which is
  located within ${\mathcal C}_k$ is straightforward and yields:
\begin{equation}\label{eq:definition-r}
r(z_2)\stackrel{\triangle}= \mathrm{Res}(h(\cdot,z_2),\hat{\mu}_i) =  \left(
  \frac{m_{\underline{\hat{\bf R}}_N}'(\hat\mu_i)\,
    m_{\underline{\hat{\bf R}}_N}'(z_2)}{m_{\underline{\bf{\hat R}}_N}^2(z_2) }
  -\frac{1}{(\hat\mu_i-z_2)^2} \right)\frac{1}{m_{\underline{\hat{\bf
      R}}_N}'(\hat\mu_i)m_{\underline{\hat{\bf R}}_N}(z_2)}\ .
\end{equation}
Similarly, if one computes $\mathrm{Res}(r,\hat{\mu}_j)$ at a zero $\hat{\mu}_j$ of
$m_{\underline{\hat{\bf R}}_N}$ located within ${\mathcal C}_k$, one obtains:
$$
\mathrm{Res}(r,\hat{\mu}_j) =
  -\frac{1}{(\hat\mu_i-\hat\mu_j)^2\,m_{\underline{\hat{\bf
      R}}_N}'(\hat\mu_i)m_{\underline{\hat{\bf R}}_N}'(\hat\mu_j)}\ .
$$

Then we need to consider the residue $\xi$ on the set ${\mathcal R}_{z_2}=\{z_1:
  m_{\underline{\hat{\bf R}}_N}(z_1)=m_{\underline{\hat{\bf
        R}}_N}(z_2)\neq 0,\ z_1 \neq z_2 \}$. (If this set is empty, then the residue is zero.) Notice that $\xi$ is not a residue of $\frac{1}{(z_1-z_2)^2} \frac{1}{m_{\underline{\hat{\bf{R}}}_N}(z_1) m_{\underline{\hat{\bf{R}}}_N}(z_2)}$, hence one needs to compute

$$g(z_1,z_2)=\frac{m'_{\underline{\hat{\bf{R}}}_N}(z_1) m'_{\underline{\hat{\bf{R}}}_N}(z_2)}{(m_{\underline{\hat{\bf{R}}}_N}(z_1)-m_{\underline{\hat{\bf{R}}}_N}(z_1))^2 } \frac{1}{m_{\underline{\hat{\bf{R}}}_N}(z_1) m_{\underline{\hat{\bf{R}}}_N}(z_2)}$$
for the residue $\xi$.
By integration by parts, one gets $$
\oint g(z_1,z_2)dz_1 =-\oint \frac{m'_{\underline{\hat{\bf{R}}}_N}(z_1) m'_{\underline{\hat{\bf{R}}}_N}(z_2)}{(m_{\underline{\hat{\bf{R}}}_N}(z_1)-m_{\underline{\hat{\bf{R}}}_N}(z_2))} \frac{dz_1}{m_{\underline{\hat{\bf{R}}}_N}^2(z_1) m_{\underline{\hat{\bf{R}}}_N}(z_2)}.$$
Let $k =\min\{i\in \mathbb{N}^{*}: m_{\underline{\hat{\bf{R}}}_N}^{(i)}(\xi) \neq 0 \}$, then by a Taylor expansion

$$ m_{\underline{\hat{\bf{R}}}_N}(z_1)=m_{\underline{\hat{\bf{R}}}_N}(z_2)+\frac{(z_1-\xi)^{k}}{k!} m_{\underline{\hat{\bf{R}}}_N}^{(k)}(\xi)+o(z_1-\xi)^{k}, $$ and
$$ m_{\underline{\hat{\bf{R}}}_N}'(z_1) =  \frac{(z_1-\xi)^{k-1}}{(k-1)!} m_{\underline{\hat{\bf{R}}}_N}^{(k)}(\xi)+o(z_1-\xi)^{k-1}.$$
Hence $$\mathrm{Res}(g, \xi) =-\frac{k m'_{\underline{\hat{\bf{R}}}_N}(z_2)}{m_{\underline{\hat{\bf{R}}}_N}^3(z_2)}.$$
As it is the derivative function of $\frac{k}{2 m_{\underline{\hat{\bf{R}}}_N}^2(z_2)}$, the integration with respect to $z_2$ is zero.

It remains to count the number of zeros within each contour. By
\cite[Lemma 1]{MES08b}, eventually, there are exactly as many zeros as
eigenvalues within each contour. It has been proved that the
contribution of the residues of $\xi$ on ${\mathcal R}_{z_2}$ is null,
hence the result in the case $k\neq \ell$:
$$
\hat \Theta_{k\ell} = - \frac{M^2}{ N_k N_\ell}
  \sum_{(i,j)\in\mathcal N_k\times\mathcal N_\ell}-\frac1{( \hat\mu_i-\hat\mu_j)^2
    m_{\underline{\hat{\bf R}}_N}'(\mu_i) m_{\underline{\hat{\bf
        R}}_N}'(\mu_j)}\ .
$$

We now compute the integral \eqref{eq:est} in the case where $k=\ell$,
and begin by the computation of the residues at $\hat\mu_i$. The
definition \eqref{eq:definition-r} of $r$ and the computation of
$\mathrm{Res}(r,\hat{\mu}_j)$ still hold true in the case where
$\hat\mu_j$ is within ${\mathcal C}_k$ but different from
$\hat\mu_i$. It remains to compute
$\mathrm{Res}(r,\hat{\mu}_i)$. Taking $z_2 \to \mu_i$, we get:
\begin{eqnarray*}
\lim_{z_2 \to \hat\mu_i} (z_2-\hat\mu_i)^3 \left(\frac{1}{m'_{\underline{\hat{{\bf R}}}_N}(\hat\mu_i) m_{\underline{\hat{{\bf R}}}_N}(z_2)(\hat\mu_i- z_2)^2} \right) &=& \frac{1}{{m'}_{\underline{\hat{{\bf R}}}_N}^2(\hat\mu_i)},\ \\
\lim_{z_2 \to \hat\mu_i} (z_2-\hat\mu_i)^2 \left( \frac{1}{m'_{\underline{\hat{{\bf R}}}_N}(\hat\mu_i)
m_{\underline{\hat{\bf R}}_N}(z_2)(\hat\mu_i- z_2)^2} - \frac{1}{{m'_{\underline{\hat{\bf R}}_N}}^2(\hat\mu_i) (z_2- \hat\mu_i)^3 } \right) &= & -\frac{m_{\underline{\hat{\bf R}}_N}''(\hat\mu_i)}{2{m'_{\underline{\hat{\bf R}}_N}}^3
(\hat\mu_i)}\ .
\end{eqnarray*}
Finally,
$$\begin{aligned}
\lim_{z_2 \to \hat\mu_i} & (z_2-\hat\mu_i) \left( \frac{1}{m'_{\underline{\hat{\bf R}}_N}(\hat\mu_i)
    m_{\underline{\hat{\bf R}}_N}(z_2)(\hat\mu_i- z_2)^2} - \frac{1}{{m'}_{\underline{\hat{\bf R}}_N}^2(\hat\mu_i) (z_2- \hat\mu_i)^3 }+ \frac{m_{\underline{\bf
        R}_N}''(\hat\mu_i)}{2{m'}_{\underline{\bf
        R}_N}^3(\hat\mu_i)(z_2-\hat\mu_i)^2} \right)\\
&= \frac{{m'''}_{\underline{\hat{\bf R}}_N}(\hat\mu_i)}{6{m'}_{\underline{\hat{\bf R}}_N}(\hat\mu_i)^3} - \frac{{m''}_{\underline{\hat{\bf R}}_N}(\hat\mu_i)^2}{4 {m'}_{\underline{\hat{\bf R}}_N}(\hat\mu_i)^4}.\\
\end{aligned}$$
Hence the residue:
$$
\mathrm{Res}(r,\hat\mu_i) = \frac{ m_{\underline{\bf
      R}_N}'''(\hat\mu_i)}{6 m_{\underline{\bf R}_N}'(\hat\mu_i)^3}-\frac{
  m_{\underline{\bf R}_N}'' (\hat\mu_i)^2}{4 m_{\underline{\bf
      R}_N}'(\hat\mu_i)^4} \ .
$$
There are two other residues that should be taken into account for the
computation of the integral: The residues of $\xi$ on ${\mathcal
  R}_{z_2}$, and the residue for $z_1=z_2$. The first case can be handled as before. For $z_1=z_2$, the calculus of $g(z_1,z_2)$ for the residue $z_1=z_2$ is exactly the same as before. It remains to compute $\frac{1}{(z_1-z_2)^2} \frac{1}{m_{\underline{\hat{\bf R}}_N}(z_1) m_{\underline{\hat{\bf R}}_N}(z_2)}$ for the residue $z_1=z_2$.
The integration by parts yields that:

$$\oint \frac{1}{(z_1-z_2)^2} \frac{dz_1}{m_{\underline{\hat{\bf R}}_N}(z_1) m_{\underline{\hat{\bf R}}_N}(z_2)} = \oint -\frac{m'_{\underline{\hat{\bf R}}_N}(z_1)}{(z_1-z_2)} \frac{dz_1}{m_{\underline{\hat{\bf R}}_N}^2(z_1) m_{\underline{\hat{\bf R}}_N}(z_2)}.$$
Then the residue for $z_1=z_2$ is:

$$-\frac{ m'_{\underline{\hat{\bf{R}}}_N}(z_2)}{m_{\underline{\hat{\bf{R}}}_N}^3(z_2)}.$$
Again, this is the derivative function of $\frac{1}{2 m_{\underline{\hat{\bf{R}}}_N}^2(z_2)}$, then the integration is zero.

Finally both have a null contribution, hence the formula:
\begin{multline*}
\hat{\Theta}_{kk} = \frac{M^2}{ N_{k}^2}\left[
     \sum_{(i,j)\in\mathcal N_{k}^2,\ i \neq j}
\frac{-1}{(\hat\mu_i-\hat\mu_j)^2 m_{\underline{\hat{\bf R}}_N}'(\hat\mu_i) m_{\underline{\hat{\bf R}}_N}'(\hat\mu_j)}\right.
\left.+ \sum_{i\in\mathcal N_{k}} \left( \frac{ m_{\underline{\hat{\bf R}}_N}'''(\hat\mu_i)}{6 m_{\underline{\hat{\bf R}}_N}'(\hat\mu_i)^3}-\frac{ m_{\underline{\hat{\bf R}}_N}'' (\hat\mu_i)^2}{4 m_{\underline{\hat{\bf R}}_N}'(\hat\mu_i)^4}\right) \right]\ .
\end{multline*}

\end{proof}

\section{Performance in the context of cognitive radios}
\label{sec:application-cognitive}

We introduce below a practical application of the above result to the telecommunication field of cognitive radios. Consider a communication network implementing orthogonal code division multiple access (CDMA) in the uplink, which we refer to as the {\it primary} network. The primary network is composed of $K$ transmitters. The data of transmitter $k$ are modulated by the $n_k$ orthogonal $N$-chip codes ${\bf w}_{k,1},\ldots,{\bf w}_{k,n_k} \in\mathbb{C}^N$. Consider also a secondary network, in sensor mode, that we assume time-synchronized with the primary network, and whose objective is to determine the distances of the primary transmitters in order to optimally reuse the frequencies used by the primary transmitters\footnote{the rationale being that far transmitters will not be interfered with by low power communications within the secondary network.}. From the viewpoint of the secondary network, primary user $k$ has power $P_k$. Then, at symbol time $m$, any secondary user receives the $N$-dimensional data vector

\begin{equation}
  \label{eq:system_model}
  {\bf y}^{(m)} = \sum_{k=1}^K \sqrt{P_k} \sum_{j=1}^{n_k} {\bf w}_{k,j} x_{k,j}^{(m)} + \sigma {\bf n}^{(m)}
\end{equation}
with $\sigma{\bf n}^{(m)}\in\mathbb{C}^N$ an additive white Gaussian noise $\mathcal{CN}(0,\sigma^2 {\bf I})$ received at time $m$ and $x_{k,j}^{(m)} \in \mathbb{C}$ the signal transmitted by user $k$ on the carrier code $j$ at time $m$, which we assume $\mathcal{CN}(0, 1)$ as well. The propagation channel is considered frequency flat on the CDMA transmission bandwidth. We do not assume that the sensor knows $\sigma^2$ neither the vectors ${\bf w}_{k,j}$. The secondary users may or may not be aware of the number of codewords employed by each user.

Equation \eqref{eq:system_model} can be compacted under the form
\begin{equation*}
  {\bf y}^{(m)} = {\bf W}{\bf P}^{\frac{1}{2}} {\bf x}^{(m)} + \sigma {\bf n}^{(m)}
\end{equation*}
with ${\bf W}=[{\bf w}_{1,1},\ldots,{\bf w}_{1,n_1},{\bf w}_{2,1},\ldots,{\bf w}_{K,n_K}]\in\mathbb{C}^{N\times n}$, $n\triangleq \sum_{k=1}^Kn_k$, ${\bf P}\in\mathbb{C}^{n\times n}$ the diagonal matrix with entry $P_1$ of multiplicity $n_1$, $P_2$ of multiplicity $n_2$, etc. and $P_K$ of multiplicity $n_K$, and ${\bf x}^{(m)}=[{\bf x}_1^{(m)T},\ldots,{\bf x}_K^{(m)T}]^{T}\in\mathbb{C}^n$ where ${\bf x}_k^{(m)}\in\mathbb{C}^{n_k}$ is a column vector with $j$-th entry $x_{k,j}^{(m)}$.
\vspace{4mm}

Gathering $M$ successive independent observations, we obtain the matrix ${\bf Y}=[{\bf y}^{(1)},\ldots,{\bf y}^{(M)}]\in\mathbb{C}^{N\times M}$ given by
\begin{equation*}
  {\bf Y} = {\bf W}{\bf P}^{\frac{1}{2}} {\bf X} + \sigma{\bf N} = \begin{bmatrix} {\bf W}{\bf P}^{\frac{1}{2}} & \sigma{\bf I}_N\end{bmatrix}\begin{bmatrix} {\bf X} \\ {\bf N} \end{bmatrix}
\end{equation*}
where ${\bf X}=[{\bf x}^{(1)},\ldots,{\bf x}^{(M)}]$ and ${\bf N}=[{\bf n}^{(1)},\ldots,{\bf n}^{(M)}]$.

The ${\bf y}^{(m)}$ are therefore independent Gaussian vectors of zero mean and covariance ${\bf R}\triangleq{\bf W}{\bf P}{\bf W}^H+\sigma^2{\bf I}_N$. Since the objective is to retrieve the powers $P_k$, while $\sigma^2$ is known, the problem boils down to finding the eigenvalues of ${\bf W}{\bf P}{\bf W}^H+\sigma^2{\bf I}_N$. However, the sensors only have access to ${\bf Y}$, or equivalently to the sample covariance matrix
\begin{equation*}
  {{\bf R}}_N \triangleq \frac1M{\bf Y}{\bf Y}^H = \frac1M\sum_{m=1}^M {\bf y}^{(m)}{\bf y}^{(m)H}.
\end{equation*}

Assuming ${{\bf R}}_N$ conveys a good appreciation of the eigenvalue clustering to the secondary user (as in Figure \ref{fig:clusters}), Theorem \ref{prop:mestre} enables the detection of primary transmitters and the estimation of their transmit powers $P_1,\ldots,P_K$; this boils down to estimating the largest $K$ eigenvalues of ${\bf W}{\bf P}{\bf W}^H+\sigma^2{\bf I}_N$, i.e. the $P_k+\sigma^2$, and to subtract $\sigma^2$ (optionally estimated from the smallest eigenvalue of ${\bf W}{\bf P}{\bf W}^H+\sigma^2{\bf I}_N$ if $n<N$). Call $\hat{P}_k$ the estimate of $P_k$.

Based on these power estimates, the secondary user can determine the optimal coverage for secondary communications that ensures no interference with the primary network. A basic idea for instance is to ensure that the closest primary user, i.e. that with strongest received power, is not interfered with. Our interest is then cast on $P_K$. Now, since the power estimator is imperfect, it is hazardous for the secondary network to state that $K$ has power $\hat{P}_K$ or to add some empirical security margin to $\hat{P}_K$. The results of Section \ref{sec:CLT} partially answer this problem.

Theorems \ref{th:CLT} and \ref{th:estimate} enable the secondary sensor to evaluate the accuracy of $\hat{P}_k$. In particular, assume that the cognitive radio protocol allows the secondary network to interfere the primary network with probability $q$ and denote $A$ the value
\begin{equation*}
	A \triangleq \inf_a \{ {\mathbb P}(P_K-\hat{P}_K>a)\leq q\}.
\end{equation*}

According to Theorem \ref{th:CLT}, for $N,M$ large, $A$ is well approximated by $\hat{\Theta}_{K,K} Q^{-1}(q)$, with $Q$ the Gaussian cumulative distribution function. If the secondary users detect a user with power $P_K$, estimated by $\hat{P}_K$, $\mathbb{P}(\hat{P}_K+A<P_K)<q$ and then it is safe for the secondary network to assume the worst case scenario where user $K$ transmits at power $\hat{P}_K+A\simeq \hat{P}_K+\hat{\Theta}_K Q^{-1}(q)$.

In Figure \ref{fig:perf}, the performance of Theorem \ref{th:CLT} is compared against $10,000$ Monte Carlo simulations of a scenario of three users, with $P_1=1$, $P_2=3$, $P_3=10$, $n_1=n_2=n_3=20$, $N=60$ and $M=600$. It appears that the limiting distribution is very accurate for these values of $N,M$. We also performed simulations to obtain empirical estimates $\hat{\Theta}_k$ of $\Theta_k$ from Theorem \ref{th:estimate}, which suggest that $\hat{\Theta}_k$ is an accurate estimator as well.

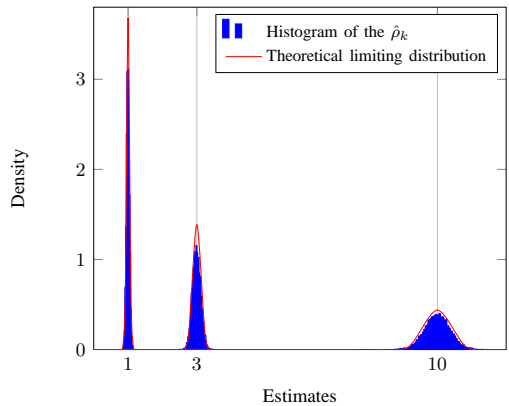
\begin{figure}
  \centering
  \begin{tikzpicture}[font=\footnotesize,scale=0.8]
    \renewcommand{\axisdefaulttryminticks}{4}
    \tikzstyle{every axis y label}+=[yshift=-10pt]
    \tikzstyle{every axis x label}+=[yshift=5pt]
    \pgfplotsset{every axis legend/.append style={cells={anchor=west},fill=white, at={(0.98,0.98)}, anchor=north east, font=\scriptsize }}
    \begin{axis}[
      xmin=0,
      ymin=0,
      xmax=12,
      ymax=3.8,
      xtick={1,3,10},
      bar width=0.5pt,
      grid=major,
      ymajorgrids=false,
      scaled ticks=true,
      xlabel={Estimates},
      ylabel={Density}
      ]
      \addplot+[ybar,mark=none,color=blue,fill=blue] coordinates{
      (0.830000,0.003333)(0.860000,0.003333)(0.880000,0.063333)(0.900000,0.206667)(0.920000,0.690000)(0.940000,1.360000)(0.960000,2.393333)(0.980000,3.086667)(1.000000,3.113333)(1.020000,2.396667)(1.040000,1.713333)(1.060000,1.076667)(1.080000,0.456667)(1.100000,0.136667)(1.120000,0.040000)(1.140000,0.016667)(2.510000,0.003333)(2.620000,0.013333)(2.650000,0.006667)(2.670000,0.020000)(2.690000,0.036667)(2.710000,0.076667)(2.730000,0.073333)(2.750000,0.140000)(2.770000,0.176667)(2.790000,0.236667)(2.810000,0.320000)(2.830000,0.503333)(2.850000,0.626667)(2.870000,0.663333)(2.890000,0.773333)(2.910000,0.906667)(2.930000,0.946667)(2.950000,1.086667)(2.970000,1.050000)(2.990000,1.153333)(3.010000,1.076667)(3.030000,0.973333)(3.050000,1.023333)(3.070000,0.846667)(3.090000,0.813333)(3.110000,0.703333)(3.130000,0.586667)(3.150000,0.450000)(3.170000,0.353333)(3.190000,0.306667)(3.210000,0.243333)(3.230000,0.160000)(3.250000,0.156667)(3.270000,0.093333)(3.290000,0.050000)(3.310000,0.040000)(3.330000,0.016667)(3.350000,0.006667)(3.370000,0.013333)(3.390000,0.003333)(3.430000,0.003333)(3.460000,0.003333)(8.750000,0.003333)(8.840000,0.006667)(8.910000,0.010000)(8.940000,0.006667)(8.970000,0.003333)(8.990000,0.010000)(9.010000,0.006667)(9.040000,0.020000)(9.060000,0.003333)(9.080000,0.006667)(9.100000,0.010000)(9.120000,0.010000)(9.140000,0.010000)(9.160000,0.026667)(9.180000,0.023333)(9.200000,0.010000)(9.220000,0.033333)(9.240000,0.046667)(9.260000,0.053333)(9.280000,0.053333)(9.300000,0.060000)(9.320000,0.040000)(9.340000,0.060000)(9.360000,0.076667)(9.380000,0.083333)(9.400000,0.110000)(9.420000,0.076667)(9.440000,0.063333)(9.460000,0.150000)(9.480000,0.140000)(9.500000,0.136667)(9.520000,0.163333)(9.540000,0.146667)(9.560000,0.166667)(9.580000,0.190000)(9.600000,0.193333)(9.620000,0.223333)(9.640000,0.173333)(9.660000,0.263333)(9.680000,0.203333)(9.700000,0.266667)(9.720000,0.276667)(9.740000,0.326667)(9.760000,0.256667)(9.780000,0.333333)(9.800000,0.306667)(9.820000,0.346667)(9.840000,0.360000)(9.860000,0.363333)(9.880000,0.350000)(9.900000,0.356667)(9.920000,0.380000)(9.940000,0.366667)(9.960000,0.390000)(9.980000,0.376667)(10.000000,0.380000)(10.020000,0.353333)(10.040000,0.393333)(10.060000,0.390000)(10.080000,0.396667)(10.100000,0.360000)(10.120000,0.293333)(10.140000,0.316667)(10.160000,0.363333)(10.180000,0.346667)(10.200000,0.293333)(10.220000,0.320000)(10.240000,0.250000)(10.260000,0.333333)(10.280000,0.256667)(10.300000,0.216667)(10.320000,0.263333)(10.340000,0.220000)(10.360000,0.236667)(10.380000,0.203333)(10.400000,0.193333)(10.420000,0.203333)(10.440000,0.156667)(10.460000,0.140000)(10.480000,0.173333)(10.500000,0.153333)(10.520000,0.136667)(10.540000,0.106667)(10.560000,0.120000)(10.580000,0.093333)(10.600000,0.090000)(10.620000,0.063333)(10.640000,0.086667)(10.660000,0.060000)(10.680000,0.053333)(10.700000,0.046667)(10.720000,0.036667)(10.740000,0.033333)(10.760000,0.056667)(10.780000,0.030000)(10.800000,0.030000)(10.820000,0.013333)(10.840000,0.026667)(10.860000,0.016667)(10.880000,0.013333)(10.900000,0.023333)(10.920000,0.016667)(10.940000,0.020000)(10.970000,0.013333)(10.990000,0.006667)(11.010000,0.016667)(11.030000,0.006667)(11.050000,0.003333)(11.090000,0.003333)(11.120000,0.003333)(11.160000,0.003333)(11.300000,0.003333)
      };
      \addplot[smooth,red,line width=0.5pt] plot coordinates{
      (0.000000,0.000000)(0.083333,0.000000)(0.166667,0.000000)(0.250000,0.000000)(0.333333,0.000000)(0.416667,0.000000)(0.500000,0.000000)(0.583333,0.000000)(0.666667,0.000000)(0.750000,0.000000)(0.833333,0.002158)(0.916667,0.573164)(1.000000,3.684245)(1.083333,0.573164)(1.166667,0.002158)(1.250000,0.000000)(1.333333,0.000000)(1.416667,0.000000)(1.500000,0.000000)(1.583333,0.000000)(1.666667,0.000000)(1.750000,0.000000)(1.833333,0.000000)(1.916667,0.000000)(2.000000,0.000000)
      };

      \addplot[smooth,red,line width=0.5pt] plot coordinates{
      (0.000000,0.000000)(0.250000,0.000000)(0.500000,0.000000)(0.750000,0.000000)(1.000000,0.000000)(1.250000,0.000000)(1.500000,0.000000)(1.750000,0.000000)(2.000000,0.000000)(2.250000,0.000000)(2.500000,0.000105)(2.750000,0.129433)(3.000000,1.386399)(3.250000,0.129433)(3.500000,0.000105)(3.750000,0.000000)(4.000000,0.000000)(4.250000,0.000000)(4.500000,0.000000)(4.750000,0.000000)(5.000000,0.000000)(5.250000,0.000000)(5.500000,0.000000)(5.750000,0.000000)(6.000000,0.000000)
      };

      \addplot[smooth,red,line width=0.5pt] plot coordinates{
      (0.000000,0.000000)(0.833333,0.000000)(1.666667,0.000000)(2.500000,0.000000)(3.333333,0.000000)(4.166667,0.000000)(5.000000,0.000000)(5.833333,0.000000)(6.666667,0.000000)(7.500000,0.000000)(8.333333,0.000012)(9.166667,0.031671)(10.000000,0.437698)(10.833333,0.031671)(11.666667,0.000012)(12.500000,0.000000)(13.333333,0.000000)(14.166667,0.000000)(15.000000,0.000000)(15.833333,0.000000)(16.666667,0.000000)(17.500000,0.000000)(18.333333,0.000000)(19.166667,0.000000)(20.000000,0.000000)
      };
      \legend{Histogram of the $\hat{\rho}_k$ \\ Theoretical limiting distribution \\}
    \end{axis}
  \end{tikzpicture}
  \caption{Comparison of empirical against theoretical variances, based on Theorem \ref{th:CLT}, for three users, $P_1=1$, $P_2=3$, $P_3=10$, $n_1=n_2=n_3=20$ codes per user, $N=60$, $M=600$ and SNR$=20$ dB.}
  \label{fig:perf}
\end{figure}

\vspace{4mm}
\section{Conclusion}
\label{sec:conclusion}
In this article, we derived an exact expression and an approximation
of the limiting performance of a statistical inference method that
estimates the population eigenvalues of a class of sample covariance
matrices. These results are applied in the context of cognitive radios
to optimize secondary network coverage based on measures of the
primary network activity.

\vspace{8mm}

\begin{appendix} \label{app:proofs}

\subsection{Proof of Proposition \ref{prop:eigen-outside-bulk}} \label{app:eigen-outside-bulk}

Let us first begin by considerations related to the supports of the
probability distributions associated to $m(z)$ and $m_N(z)$. Denote by
${\mathcal S}$ and ${\mathcal S}_N$ these supports and recall that
${\mathcal S}$ is the union of $L$ clusters:
$$
{\mathcal S}= (a_1,b_1)\cup \cdots \cup (a_L,b_L)\ .
$$
The following proposition clarifies the relations between ${\mathcal S}_N$ and ${\mathcal S}$.

\begin{proposition}\label{prop:support} Let $N,M\rightarrow\infty$, then for $N$ large enough, the support ${\mathcal S}_N$
  of the probability distribution associated to the Stieltjes
  transform $m_N(z)$ is the union of $L$ clusters:
$$
{\mathcal S}_N = (a^N_1,b^N_1)\cup \cdots \cup (a^N_L,b^N_L)\ .
$$
Moreover, the following convergence holds true :
$$
a_\ell^N \xrightarrow[N,M\rightarrow\infty]{} a_\ell\ ,\quad b_\ell^N \xrightarrow[N,M\rightarrow\infty]{} b_\ell\ ,
$$
for $1\le \ell \le L$.
\end{proposition}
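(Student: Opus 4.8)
The plan is to describe $\mathcal{S}_N$ through the functional inverse of $\underline{m}_N$, in the spirit of Silverstein and Choi \cite{SIL95} (see also \cite{COUbook}). Since $F^{{\bf R}_N}=\sum_{r=1}^{L}\frac{N_r}{N}\delta_{\rho_r}$, I introduce on $\mathbb{R}\setminus\{0,-1/\rho_1,\ldots,-1/\rho_L\}$ the function
$$
\underline{x}_N(\underline{m})\;=\;-\frac{1}{\underline{m}}+\frac{N}{M}\int\frac{t}{1+t\underline{m}}\,dF^{{\bf R}_N}(t)\;=\;-\frac{1}{\underline{m}}+\frac{N}{M}\sum_{r=1}^{L}\frac{N_r}{N}\,\frac{\rho_r}{1+\rho_r\underline{m}}\ ,
$$
which is the inverse of $\underline{m}_N$ on $\mathbb{R}\setminus\underline{\mathcal S}_N$. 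By \cite{SIL95}, the complement of $\underline{\mathcal S}_N$ in $\mathbb{R}$ is exactly the image $\underline{x}_N\big(\{\underline m:\underline{x}_N'(\underline m)>0\}\big)$; hence $\underline{\mathcal S}_N$ is a finite union of compact intervals whose endpoints are the values of $\underline{x}_N$ at the real zeros of $\underline{x}_N'$, and the number of clusters equals the number of maximal intervals on which $\underline{x}_N'>0$. Because $m_N(z)=\frac{M}{N}\underline{m}_N(z)-(1-\frac{M}{N})\frac1z$, the supports $\mathcal S_N$ and $\underline{\mathcal S}_N$ agree away from the origin, and since all $\rho_r>0$ the clusters of $\mathcal S$ lie in $(0,\infty)$; so it is enough to prove the statement for $\underline{\mathcal S}_N$.

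I then pass to the limit. The atoms $\rho_1,\ldots,\rho_L$ are fixed while $N_r/N\to c_r/c$ and $N/M\to c$, so $\underline{x}_N\to\underline{x}$ and $\underline{x}_N'\to\underline{x}'$ locally uniformly on $\mathbb{R}\setminus\{0,-1/\rho_1,\ldots,-1/\rho_L\}$, where $\underline{x}$ is the analogous inverse function attached to $F^{\bf R}$. The separability assumption (A2) is precisely what rigidifies this picture. Rewriting $\underline{x}_N'(\underline m)=0$ in the variable $x=-1/\underline m$ gives $\frac MN=\frac1N\sum_{r=1}^{L}N_r\big(\frac{\rho_r}{\rho_r-x}\big)^2=:h_N(x)$, and, since $h_N$ is strictly convex on each interval between consecutive $\rho_r$ and blows up at the $\rho_r$, the points $\alpha_1\le\cdots\le\alpha_{L-1}$ of (A2) are exactly the critical points of $h_N$, one (a minimum) in each of the $L-1$ bounded gaps. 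The inequalities $\frac MN>\Psi_N(i)$, $1\le i\le L$, say that the level $\frac MN$ lies strictly above every such local minimum of $h_N$ (and, trivially, above the value $0$ of $h_N$ at $\pm\infty$), and $\inf_N\{\frac MN-\Psi_N(i)\}>0$ makes this gap uniform in $N$. Consequently, for all $N$ large enough $\underline{x}_N'=0$ has the same number of real solutions as its limit $\underline{x}'=0$, these solutions are simple (each crossing $h_N=\frac MN$ is transversal because it occurs away from the minima of $h_N$, where $h_N'\neq0$) and remain bounded and bounded away from one another and from $\{0,-1/\rho_r\}$, and they delimit exactly $L$ maximal branches on which $\underline{x}_N'>0$; equivalently, $\underline{\mathcal S}_N$ — like $\underline{\mathcal S}$ — is a union of exactly $L$ clusters $(a_1^N,b_1^N)\cup\cdots\cup(a_L^N,b_L^N)$.

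Finally, the convergence of the endpoints follows from continuity of simple zeros under locally uniform convergence (an implicit-function or Hurwitz-type argument): each zero of $\underline{x}_N'$ converges to the corresponding simple zero of $\underline{x}'$, and then, using $\underline{x}_N\to\underline{x}$ locally uniformly, the value of $\underline{x}_N$ at such a zero converges to the value of $\underline{x}$ at the limiting zero, i.e.\ $a_\ell^N\to a_\ell$ and $b_\ell^N\to b_\ell$ for $1\le\ell\le L$. The main obstacle is the count-preservation step of the second paragraph: one must rule out, \emph{uniformly in $N$}, both the appearance of a spurious cluster and the coalescence of two clusters in the limit, which amounts to controlling the number and the transversality (simplicity) of the zeros of $\underline{x}_N'$ as $N\to\infty$. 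This is exactly what the uniform separability $\inf_N\{\frac MN-\Psi_N(i)\}>0$ buys — it keeps the level $\frac MN$ a fixed distance above all the merging thresholds $\Psi_N(i)$ — and converting that strict inequality into the simplicity of the relevant zeros, so that the continuity argument for the endpoints applies, is the delicate point of the proof.
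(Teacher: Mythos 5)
Your proposal is correct and follows essentially the same route as the paper: reduce to the support $\underline{\mathcal S}_N$, invoke the Silverstein--Choi characterization of the support via the inverse function $\underline{x}_N$, use the separability condition (A2) to show that $\underline{x}_N'=0$ has exactly $2L$ real solutions delimiting $L$ clusters, and obtain convergence of the endpoints from convergence of the solutions of the defining equation as $N/M\to c$. Your change of variable $x=-1/\underline m$ (working with $h_N$ and its convexity, plus a Hurwitz-type argument for the endpoints) is only a cosmetic repackaging of the paper's count of the roots of $z_{{\bf R}_N}'(\underline m)=0$ and its appeal to continuity of the roots of the limiting equation.
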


\begin{remark}
If the support ${\mathcal S}_N$ contains zero, (ex: $N>M$), then zero is also in the support $\mathcal S$, the conclusion is still true.

\end{remark}

\begin{proof}[Proof of Proposition \ref{prop:support}] Recall the relations:
\begin{equation}\label{eq:defeqdet}
  \underline{m}_N(z) = - \left( z - \frac{N}M\int \frac{t}{1+t\underline{m}_N(z)}dF^{{\bf R}_N}(t) \right)^{-1}\
\end{equation}
and

\begin{equation}
m_N(z) = \frac{M}{N} \underline{m}_N(z)-\left( 1-\frac{M}{N} \right) \frac{1}{z}.
\end{equation}

As the inverse of Stieltjes transform of $-\frac{1}{z}$ is $\delta_0$ (the Dirac mass on 0) and $\underline{m}_N(z)$ is a continuous function over $\mathbb{R}_{+}^{*}$, for $a$,$b$ with $0<a<b$, by the inverse formula of Stieltjes transform, one gets:

$$F_N([a,b])= \frac{M}{N}\underline{F}_N([a,b]).$$
So it suffices to study the support $\underline{\mathcal{S}}_N$ associated to $\underline{F}_N$.

From the definition of $\underline{m}_N(z)$ (see formula (\ref{eq:defeqdet})), we obtain:
$$
z_{{\bf R}_N}(\underline{m}_N)=-\frac{1}{\underline{m}_N}+ \frac{N}{M} \int \frac{t d F^{\bf R_N}(t)}{1+t \underline{m}_N(z)}.$$
Denote by $B=\{m \in \mathbb{R}: m \neq 0, -m^{-1} \notin \{\rho_1,\cdots,\rho_L\} \}$. In \cite[Theorem 4.1 and Theorem 4.2]{CHO95}, Silverstein and Choi show that for a real number $x$, $x \in \underline{\mathcal S}_N^c \Longleftrightarrow \underline{m}_x \in B$ and $z_{\bf R_N}'(\underline{m}_x)= \frac1{\underline{m}_x^2} - \frac{N}M \int \frac{t^2 d F^{\bf R_N}(t)}{(1+t \underline{m}_x)^2}>0$ with $\underline{m}_N(x)=\underline{m}_x$ and $z_{\bf R_N}(\underline{m}_  x) = x$.

Then if $a \in \partial{\underline{\mathcal S}}_N$, $m_a \notin B$ or $z_{\bf R_N}'(m_a) \leq 0$ with $m_a=\underline{m}_N(a)$. Now we will show that $m_a \in B$. In \cite[Theorem 5.1]{CHO95}, $m_a \neq 0$. If $-m_a^{-1} \in S_{F^{\bf R_N}}$, as $F^{\bf R_N}$ is discrete, we get that $\lim_{m \rightarrow m_a} \int \frac{t^2 d F^{\bf R_N}(t)}{(1+t m)^2} \longrightarrow \infty$. So on the neighborhood to the left and to the right of $m_a$, $z_{\bf R_N}' <0$ which contradicts \cite[Theorem 5.1]{CHO95}.

Hence $z_{\bf R_N}'(m_a) \leq 0$. By the continuity, we get

$$z_{\bf R_N}'(m_a) = \frac1{m_a^2} - \frac{N}M \int \frac{t^2 d F^{{\bf R}_N}(t)}{(1+t m_a)^2}=0.$$
It is equivalent to the following equation:
\begin{equation}
\label{eq:borne}
z_{\bf R_N}'(m_a) = \frac1{m_a^2} - \frac{1}M \sum_{i=1}^{L} N_i \frac{\rho_i^2} {(1+\rho_i m_a)^2}=0.
\end{equation}
By multiplying the common denominator, one will get a polynomial of the degree 2L in $m_a$.
Now we will show that these 2L roots are real. At first, notice that

$$\frac1{m^2} - \frac{N}M \int \frac{t^2 d F^{{\bf R}_N}(t)}{(1+t m)^2} \xrightarrow[m \rightarrow -\frac{1}{\rho_i}]{} - \infty ,$$
and
$$z_{\bf R_N}''(m) = -\frac2{m^3} + \frac{N}M \int \frac{2 t^3 d F^{{\bf R}_N}(t)}{(1+t m)^3}. $$
So $z_{\bf R_N}''(m)$ has one and only one zero in the open set $(-\frac{1}{\rho_i}, -\frac{1}{\rho_{i+1}})$ for $i\in \{1,\cdots,L-1\}$. Then for $\beta_i \in (-\frac{1}{\rho_i}, -\frac{1}{\rho_{i+1}})$ such that $z_{\bf R_N}''(\beta_i)=0$, it suffices to show that $z_{\bf R_N}'(\beta_i) >0 $ in order to prove that there will be two zeros for $z_{\bf R_N}'(m)$ in the set $(-\frac{1}{\rho_i}, -\frac{1}{\rho_{i+1}})$. From the separability condition (cf. Assumption (A2)), $\inf_N \{ \frac{M}N - \Psi_N(i)\} >0$, and

$$\begin{aligned} z_{\bf R_N}'(-\frac{1}{\alpha_i}) &= \alpha_i^2 - \frac{N}M \int \frac{t^2 d F^{\bf R_N}(t)}{(1-\frac{t}{\alpha_i})^2}\\
&=\alpha_i^2 \left( 1-\frac{1}{M} \sum_{r=1}^L N_i \frac{\rho_i^2 }{(\alpha_i-\rho_i)^2}         \right) >0\\
\end{aligned}$$
Thus we obtain $2(L-1)$ roots. Besides, in the open set $(-\rho_L^{-1}, 0)$, $$\frac1{m^2} - \frac{N}M \int \frac{t^2 d F^{{\bf R}_N}(t)}{(1+t m)^2} \xrightarrow[m_a \rightarrow 0^{-}]{} +\infty,$$
there exists another root in this set. In the open set $(-\infty, -\rho_1^{-1})$, $$\frac1{m^2} - \frac{N}M \int \frac{t^2 d F^{{\bf R}_N}(t)}{(1+t m)^2} \xrightarrow[m \rightarrow -\infty]{} 0$$
and $$\frac1{m^2} - \frac{N}M \int \frac{t^2 d F^{{\bf R}_N}(t)}{(1+t m)^2}   \underset{m \rightarrow -\infty}{\sim} \frac1{m^2}(1-\frac{L}{M}) >0. $$
Hence the last root in this open set. This proves that ${\mathcal S}_N = (a^N_1,b^N_1)\cup \cdots \cup (a^N_L,b^N_L) .$

To prove $a_\ell^N \xrightarrow[N,M\rightarrow\infty]{} a_\ell$ and $b_\ell^N \xrightarrow[N,M\rightarrow\infty]{} b_\ell$  , notice that $a_i$ $b_i$ satisfy the same type of the equation by replacing $\frac{N}{M}$ by $c$ and $F^{\bf R_N}$ by $F^{\bf R}$. As $\frac{N}{M} \rightarrow c$ and $\frac{K_i}{M} \rightarrow c_i$, the roots of Equation (\ref{eq:borne}) converge to those of the limit equation (see \cite{CU89}). Thus we achieve the second conclusion.

\end{proof}

We are now in position to establish the proof of Proposition
\ref{prop:eigen-outside-bulk}.

Denote by ${\mathcal S}(\varepsilon)$ the $\varepsilon$-blow-up of ${\mathcal S}$, i.e.
${\mathcal  S}(\varepsilon) =\{x\in \mathbb{R},\ d(x,S)<\varepsilon\}$.
Let $\varepsilon>0$ be small enough and consider a smooth function
$\phi$ equal to zero on ${\mathcal  S}(\varepsilon/3)$, equal
to 1 if $x \notin {\mathcal S}(\varepsilon)$, equal to zero again if $|x|\ge \tau$ (as we shall see,
$\tau$ will be chosen to be large), and smooth in-between with
$0\le \phi\le 1$:
$$
\phi(x)=\begin{cases}
0 & \text{if}\  d(x,\mathcal{S})<\varepsilon/3\ , \\
1 &\text{if}\ d(x,\mathcal{S})> \varepsilon\ , |x| \leq \tau-\epsilon\\
0 & \text{if}\ |x|> \tau\ .
\end{cases}
$$
Notice that if $N,M\rightarrow\infty$ and $N$ is large enough, then by
Proposition \ref{prop:support}, $\phi(x)=0$ for all $x\in {\mathcal
  S}_N$.  Now if ${\bf Z}$ is a $M\times M$ hermitian matrix with
spectral decomposition ${\bf Z}= {\bf U}\, \mathrm{diag}\left(
  \gamma_i;\ 1\le i\le M) \right)\, {\bf U}^H$, where ${\bf U}$ is
unitary and $\mathrm{diag}\left( \gamma_i;\ 1\le i\le M) \right)$
stands for the $M\times M$ diagonal matrix whose entries are ${\bf
  Z}$'s eigenvalues, write $\phi({\bf Z}) = {\bf U}\,
\mathrm{diag}\left( \phi(\gamma_i);\ 1\le i\le M) \right)\, {\bf
  U}^H$.

We have:
\begin{eqnarray*}
  \mathbb{P}(\sup_n d(\lambda_n,S)> \varepsilon ) &\leq& \mathbb{P} (\|\hat{\bf R}_N\|  > \tau- \varepsilon)\ +\ \mathbb{P}(\mathrm{Tr}\,\phi(\hat {\bf R}_N)\geq 1) \\
  &=& \mathbb{P} (\|\hat{\bf R}_N\| > \tau - \varepsilon)\ +\ \mathbb{P}([\mathrm{Tr}\,\phi(\hat{\bf R}_N)]^{p} \geq 1) \\
&\stackrel{(a)}\leq&\mathbb{P} (\|\hat{\bf R}_N\| > \tau -\varepsilon)\ +\
  \mathbb{E}[\mathrm{Tr}\,\phi(\hat{\bf R}_N)]^{p}\ ,
\end{eqnarray*}
for every $p\ge 1$, where $(a)$ follows from Markov's inequality.  The
fact that $\mathbb{P} (\|\hat{\bf R}_N\| > \tau)= \OO(N^{-\ell})$ for
$\tau$ large enough and every $\ell \in \mathbb{N}^*$ is well-known
(see for instance \cite[Section 9.7]{SIL06}). We shall therefore
establish estimates over $\mathbb{E}[\mathrm{Tr}\,\phi(\hat{\bf
  R}_N)]^{p}$. Take $p=2^k$; we prove the following statement by
induction: For $k\ge 1$ and for every integer $\beta < 2^k$ and for
every smooth function $f$ with compact support whose value on
${\mathcal S}({\varepsilon}/3)$ is zero ,
$$
\mathbb{E}\left( \mathrm{Tr} f(\hat{\bf R}_N)\right)^{2^k}\quad =\quad  \OO\left(\frac{1}{N^{\beta}}\right)\ .
$$

First notice that due to Proposition \ref{prop:support},
$\int_{{\mathcal S}_N} f(\lambda) \, F_N(d\lambda)=0$ (where $F_N$
is the probability distribution associated to $m_N$) for $N,M$ large
enough ($N,M\rightarrow\infty$). A minor modification of \cite[Lemma
2]{LOU10} (whose model is slightly different) with the help of
\cite[Proposition 5]{HAC06} yields that for $N,M\rightarrow\infty$ and
$N$ large enough, $\mathbb{E}\, \mathrm{Tr}\,f(\hat {\bf R}_N)=\OO
({N}^{-1})$, and the property is verified for $k=0$.


Let $k>0$ be fixed and assume that the result holds true for
$\beta<2^k$. We want to show that $\mathbb{E}[\mathrm{Tr} f(\hat{\bf
  R}_N)]^{2^{(k+1)}}= \OO (N^{-2 \beta})$. At step $k+1$, the
expectation writes:
\begin{eqnarray}\label{eq:induction}
  \lefteqn{\left| \mathbb{E}[\mathrm{Tr}\, f(\hat{\bf R}_N)]^{2^{(k+1)}} \right|}
  \\ &=& \left| \mathbb{E}\left([\mathrm{Tr} f(\hat{\bf R}_N)]^{2^k} + \mathbb{E}[\mathrm{tr} f(\hat{\bf R}_N)]^{2^k}- \mathbb{E}[\mathrm{Tr} f(\hat{\bf R}_N)]^{2^k}\right)^2 \right|\nonumber \\
  &\leq& 2\left(\mathrm{Var}[\mathrm{Tr}\, f(\hat{\bf R}_N)]^{2^k}+
|\mathbb{E}[\mathrm{Tr} f(\hat{\bf R}_N)]^{2^k}|^2\right).
\end{eqnarray}
The second term of the right hand side (r.h.s.) of the equation can be handled by the induction hypothesis:
$$
\left|\mathbb{E}[\mathrm{Tr} f(\hat{\bf R}_N)]^{2^k}\right|^2=\OO\left(\frac{1}{N^{2\beta}}\right).
$$
We now rely on Poincar\'e-Nash inequality (see for instance
\cite[Section II-B]{HAC06}) to handle the first term of the
r.h.s. Applying this inequality, we obtain:
\begin{eqnarray}\label{eq:PN}
\mathrm{Var} \left( (\tr f(\hat{\bf R}_N))^{2^k} \right)
&\le &
K \sum_{i,j}
\mathbb{E} \left[ \left|\frac{\partial [\tr f(\hat{\bf R}_N)]^{2^k}}{\partial Y_{i,j}}  \right|^2
+ \left|\frac{\partial [\tr f( \hat{\bf R}_N)]^{2^k}}{\partial \overline{Y}_{i,j}} \right|^2\right]\ ,
\end{eqnarray}
where $K$ is a constant which does not depend on $N,M$ and which is
greater than ${\bf R}_N$'s eigenvalues. In order to compute the
derivatives of the r.h.s., we rely on \cite[Lemma
4.6]{HAA06}. This yields:
\begin{eqnarray*}
\frac{\partial}{\partial Y_{i,j}} [\tr f( \hat{\bf R}_N)]^{2^k}&=& \frac{2^k}{M}
[\tr f(\hat{\bf R}_N)]^{2^k-1} [{\bf Y}_N^{*}  f'(\hat{\bf R}_N)]_{j,i}\ ,\\
\frac{\partial}{\partial \overline{Y_{i,j}}} [\tr f(\hat{\bf R}_N)]^{2^k}&=&
\frac{2^k}{M}  [\tr f(\hat{\bf R}_N)]^{2^k-1} [f'(\hat{\bf R}_N) {\bf Y}_N]_{i,j}\ .
\end{eqnarray*}
Plugging these derivatives into \eqref{eq:PN}, we obtain:
\begin{eqnarray*}
\lefteqn{\mathrm{Var}(\mathrm{Tr} [f(\hat{\bf R}_N)]^{2^k})}\\
& \leq& \frac{K\, 2^{2k+1}}{M^2} \mathbb{E} \left[ (\tr f(\hat{\bf R}_N))^{(2^{k+1}-2)}\,
\tr (f'(\hat{\bf R}_N) {\bf Y}_N {\bf Y}_N^{*} f'(\hat{\bf R}_N) ) \right] \ ,\\
& = & \frac{K\, 2^{2k+1}}{M} \mathbb{E} \left[ (\tr f(\hat{\bf R}_N))^{(2^{k+1}-2)}\,
\tr (f'(\hat{\bf R}_N)^2 \hat{\bf R}_N) \right]    \ ,   \\
& \leq& \frac{K\, 2^{2k+1}}{M} \left| \mathbb{E} [\mathrm{Tr} f(\hat{\bf R}_N)]^{2^{k+1}} \right|^{\frac{2^{k+1}-2}{2^{k+1}}}
\times  \left|
\mathbb{E} [\mathrm{Tr} f'(\hat{\bf R}_N)^2 \hat{\bf R}_N]^{2^k} \right| ^{\frac{1}{2^k}}\ ,
\end{eqnarray*}
where the last inequality is a consequence of H\"{o}lder's inequality.

As the function $h(\lambda)=\lambda [f'(\lambda)]^2$ satisfies the induction hypothesis, we have for every
$\alpha<1$:
$$
\left| \mathbb{E}\mathrm{Tr}[ f'(\hat{\bf R}_N)^2 \hat{\bf R}_N]^{2^k} \right| ^{\frac{1}{2^k}}=\OO(N^{-\alpha}).
$$
Plugging this estimate into \eqref{eq:induction}, we obtain:
\begin{equation}
\label{eq:tight}
\left| \mathbb{E}[\tr f(\hat{\bf R}_N)]^{2^{(k+1)}}\right|  \leq K \left( \frac{1}{N^{1+\alpha}} |\mathbb{E}[\tr
f(\hat{\bf R}_N)]^{2^{(k+1)}}|^{\frac{2^{k+1}-2}{2^{k+1}}} \right) + \OO(N^{-2\beta})\ ,
\end{equation}
where $K$ is a constant independent of $M,N,k$. Notice that inequality
\eqref{eq:tight} involves twice the quantity of interest
$\mathbb{E}[\tr f(\hat{\bf R}_N)]^{2^{(k+1)}}$ that we want to upper
bound by $O(N^{-2\beta})$. We shall proceed iteratively.

Notice that $\tr [ f(\hat{\bf R}_N)] \leq \sup_{x\in \mathbb{R}}
|f(x)| \times N$ because $f$ is bounded on $\mathbb R$; hence the rough estimate:
$$\mathbb{E}[\mathrm{Tr} f(\hat{\bf R}_N)]^{2^{(k+1)}} = \OO
(N^{2^{k+1}}).
$$
Plugging this into \eqref{eq:tight} yields:
$$
\mathbb{E}[\tr f(\hat{\bf
  R}_N)]^{2^{(k+1)}} \ =\ \OO (N^{a_1})\ ,
$$
where $a_0=2^{k+1}$ and $a_1=a_0 \frac{2^{k+1}-2}{2^{k+1}} - (1+\alpha).$ Iterating the procedure,
we obtain:
$$
\mathbb{E}[\tr f(\hat{\bf
  R}_N)]^{2^{(k+1)}} \ =\  \OO \left( N^{a_\ell \vee (-2\beta)}\right)\ ,
$$
where $a_\ell=a_{\ell-1} \frac{2^{k+1}-2}{2^{k+1}}-(1+ \alpha)$ and
$x\vee y$ stands for $\sup(x,y)$. Now, in order to conclude the
proof, it remains to prove that i) the sequence $(a_\ell)$ converges
to some limit $a_\infty$, ii) for some well-chosen $\alpha<1$,
$a_\infty \in (-2^{k+1}, -2\beta)$. Write:
$$
a_{\ell+1}+ 2^k (1+\alpha) = \frac{2^{k}-1}{2^{k}}(a_\ell +2^k (1+\alpha ) )\ ,
$$
hence $a_\ell$ converges to $-2^k(1+\alpha)$ which readily belongs to
$(-2^{k+1}, -2\beta)$ for a well-chosen $\alpha\in (0,1)$.
Finally $\mathbb{E}[\tr f(\hat{\bf R}_N)]^{2^{(k+1)}} \ =\ \OO (N^{-2 \beta})$
which ends the induction.




It remains to apply this estimate to $\mathbb{E}[\tr \phi(\hat{\bf
  R}_N)]^{\ell}$ in order to get the desired result.

\subsection{Proof of Lemma \ref{lemma:convergence-approx}}\label{app:convergence-approx}
As explained in Section \ref{sec:CLT}, there are two conditions to prove (Billingsley \cite[Theorem 13.1]{BIL08}):
\begin{itemize}
\begin{item} Finite-dimensional convergence of the process $(X_N, X_N')$.
\end{item}

\begin{item}
Tightness on the contour $\mathcal C_k$.
\end{item}
\end{itemize}

\begin{remark}
As $u_N$ (resp. $u'_N$) converges almost surely to $\underline{u}$ (resp. $\underline{u}'$) (see Silverstein and Bai \cite{SIL95}), the convergence of the process $(X_N, X_N', u_N, u_N')$ is achieved as soon as the convergence of the process $(X_N, X_N')$ is proved.

\end{remark}

In \cite{BAI04}, Bai and Silverstein establish a central limit theorem for $F^{{\bf R}_N}$ with the complex Gaussian entries $X_{ij}$. We recall below their main result.
\begin{proposition}\cite{BAI04}
	\label{prop:baisil}
With the notations introduced in Section \ref{sec:model}, for $f_1,\ldots,f_p$, analytic on an open region containing $\mathbb{R}$,

\begin{enumerate}

\begin{item}
$\left(N\int f_i(x)d(F^{\hat{\bf R}_N}-F_N)(x)\right)_{1\leq i\leq p}$ forms a tight sequence on $N$,
\end{item}

\begin{item}

$$\left(N\int f_i(x)d(F^{\hat{\bf R}_N}-F_N)(x)\right)_{1\leq i\leq p} \xrightarrow[]{\mathcal D} \mathcal N(0,\bf V), \nonumber$$
where ${\bf V}=(V_{ij})$ and
$$V_{ij} = -\frac1{4\pi^2}\oint\oint f_i(z_1)f_j(z_2)v_{ij}(z_1,z_2) dz_1dz_2, \nonumber $$
with
$$v_{ij}(z_1,z_2) = \frac{\underline{m}'(z_1)\underline{m}'(z_2)} {(\underline{m}(z_1)-\underline{m}(z_2))^2}-\frac1{(z_1-z_2)^2}$$
where the integration is over positively oriented contours that circle around the support $\mathcal S$.
\end{item}
\end{enumerate}
\end{proposition}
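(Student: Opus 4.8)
The plan is to follow the strategy of Bai and Silverstein \cite{BAI04}: turn the linear spectral statistic into a contour integral of the normalized Stieltjes transform, prove a central limit theorem for the latter as a random \emph{process} on that contour, and transfer back by a continuous mapping. By linearity it suffices to treat a single analytic $f$, the joint statement for $f_1,\ldots,f_p$ then following by applying the same linear functional coordinatewise. Fix a positively oriented contour $\mathcal C$ enclosing the support $\mathcal S$ of $F$ and contained in the region of analyticity of $f$. Since the eigenvalues of $\hat{\bf R}_N$ fall in an arbitrarily small blow-up of $\mathcal S$ off an event of probability $\mathcal O(N^{-\ell})$ (this is precisely Proposition \ref{prop:eigen-outside-bulk}), Cauchy's formula yields, up to that negligible event,
$$
N\!\int\! f\,d(F^{\hat{\bf R}_N}-F_N)\;=\;-\frac1{2\pi i}\oint_{\mathcal C} f(z)\,M_N(z)\,dz,\qquad M_N(z)\triangleq N\bigl(m_{\hat{\bf R}_N}(z)-m_N(z)\bigr),
$$
so it is enough to prove that $M_N$ converges weakly in $C(\mathcal C,\mathbb C)$ to the centered Gaussian process $M$ with $\cov(M(z_1),M(z_2))=\frac{\underline{m}'(z_1)\underline{m}'(z_2)}{(\underline{m}(z_1)-\underline{m}(z_2))^2}-\frac1{(z_1-z_2)^2}=v(z_1,z_2)$.

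For the finite-dimensional convergence I would split $M_N=M_N^{(1)}+M_N^{(2)}$ with $M_N^{(1)}(z)=N\bigl(m_{\hat{\bf R}_N}(z)-\mathbb E\,m_{\hat{\bf R}_N}(z)\bigr)$ and $M_N^{(2)}(z)=N\bigl(\mathbb E\,m_{\hat{\bf R}_N}(z)-m_N(z)\bigr)$. Writing $\mathbb E_j$ for conditional expectation given the first $j$ columns of ${\bf X}_N$, the fluctuation part is a martingale-difference sum $M_N^{(1)}(z)=\sum_{j=1}^M\gamma_j(z)$, $\gamma_j(z)=(\mathbb E_j-\mathbb E_{j-1})\tr(\hat{\bf R}_N-z{\bf I})^{-1}$; a rank-one (Sherman--Morrison) resolvent identity isolates the $j$-th column and rewrites $\gamma_j(z)=(\mathbb E_j-\mathbb E_{j-1})\partial_z\log\bigl(1+\tfrac1M{\bf r}_j^{*}(\hat{\bf R}_{N,(j)}-z{\bf I})^{-1}{\bf r}_j\bigr)$, with $\hat{\bf R}_{N,(j)}$ the matrix $\hat{\bf R}_N$ with its $j$-th rank-one term deleted. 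Using concentration of the quadratic form ${\bf r}_j^{*}(\hat{\bf R}_{N,(j)}-z{\bf I})^{-1}{\bf r}_j$ around $\tr[{\bf R}_N(\hat{\bf R}_{N,(j)}-z{\bf I})^{-1}]$, I would verify the Lyapunov condition $\sum_j\mathbb E|\gamma_j(z)|^4\to 0$ and compute the limiting bracket $\sum_j\mathbb E_{j-1}\bigl[\gamma_j(z_1)\overline{\gamma_j(z_2)}\bigr]$; after substituting the deterministic equivalents the latter converges to the solution of an algebraic fixed-point relation which one identifies with $v(z_1,z_2)$. The martingale CLT then gives the joint asymptotic normality of $(\Re M_N^{(1)}(z_i),\Im M_N^{(1)}(z_i))_i$. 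For the bias term, comparing the master equation satisfied by $\mathbb E\,m_{\hat{\bf R}_N}$ with the one defining $m_N$ and controlling the remainder at order $N^{-2}$ shows $M_N^{(2)}\to 0$ uniformly on $\mathcal C$; this is precisely where the complex Gaussian law of the $X_{ij}$ enters, since it kills the $\mathcal O(1/N)$ mean correction that is present for general entries.

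Tightness of $(M_N)$ on $\mathcal C$ I would obtain from $\sup_{z\in\mathcal C}\mathbb E|M_N(z)|^2<\infty$ together with the increment bound $\mathbb E|M_N(z_1)-M_N(z_2)|^2\le K|z_1-z_2|^2$, which suffices for processes with analytic sample paths; both estimates follow from the same rank-one resolvent identities, from Burkholder's inequality applied to $M_N^{(1)}$, and from direct estimates on $M_N^{(2)}$. What makes all the resolvent factors uniformly bounded is that $\mathcal C$ is kept at positive distance from $\mathcal S$, so that $\|(\hat{\bf R}_N-z{\bf I})^{-1}\|$ is bounded uniformly on $\mathcal C$ off an $\mathcal O(N^{-\ell})$-probable event (again by Proposition \ref{prop:eigen-outside-bulk}), while on its complement the crude bound $|M_N(z)|\le KN$ makes the contribution vanish in the limit. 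Combining finite-dimensional convergence with tightness gives $M_N\Rightarrow M$ in $C(\mathcal C,\mathbb C)$, and applying the continuous linear map $g\mapsto\bigl(-\tfrac1{2\pi i}\oint_{\mathcal C}f_i(z)g(z)\,dz\bigr)_i$ produces the claimed convergence to $\mathcal N(0,{\bf V})$ with $V_{ij}=-\tfrac1{4\pi^2}\oint\oint f_i(z_1)f_j(z_2)v(z_1,z_2)\,dz_1dz_2$, tightness of the linear statistics being inherited through the same map. I expect the main obstacle to be the exact identification of the limiting conditional variance in the martingale CLT --- the bookkeeping of the quadratic-form fluctuations and the resolution of the associated deterministic equation that yields $v(z_1,z_2)$ --- together with securing the $N^{-2}$-order control needed to discard the bias term.
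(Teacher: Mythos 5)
First, a point of context: the paper does not prove Proposition \ref{prop:baisil} at all --- it is imported verbatim from Bai and Silverstein \cite{BAI04} and used as an external ingredient in the proof of Lemma \ref{lemma:convergence-approx}. Your sketch reproduces the strategy of that reference (contour representation of the linear statistic, the split $M_N=M_N^{(1)}+M_N^{(2)}$ into a martingale part and a bias part, a martingale CLT for the finite-dimensional laws, tightness on the contour, then a continuous-mapping step), so in outline it is the right proof of the right statement.

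There is, however, a genuine gap in your tightness step. Any contour enclosing $\mathcal S$ must cross the real axis, and at those crossing points neither your crude bound $|M_N(z)|\le KN$ on the exceptional event nor the uniform increment bound $\mathbb E|M_N(z_1)-M_N(z_2)|^2\le K|z_1-z_2|^2$ holds as stated: with positive (if tiny) probability an eigenvalue of $\hat{\bf R}_N$ falls arbitrarily close to such a point, $N\,m_{\hat{\bf R}_N}(z)$ is then unbounded, and second moments of the resolvent trace at real $z$ need not even be finite, so conditioning on the $\OO(N^{-\ell})$ event from Proposition \ref{prop:eigen-outside-bulk} does not rescue a second-moment estimate there. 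This is precisely why \cite{BAI04} --- and the present paper in its own related tightness argument, Appendix \ref{app:tightness} --- work with a truncated process $\hat M_N^{1}$, frozen within distance $N^{-1}\varepsilon_N$ of the real axis (with $\varepsilon_N\ge N^{-\delta}$, $\delta\in(0,1)$), prove the Billingsley-type increment bound only for the truncated process (via Poincar\'e--Nash or martingale moment inequalities combined with the no-eigenvalue result), and separately show that the truncation perturbs the contour integrals by $\OO(\varepsilon_N)$. Your argument needs this (or an equivalent) device; without it the claimed bounds fail exactly where the contour meets $\mathbb R$. A lesser reservation: the identification of the limiting martingale bracket with $v(z_1,z_2)$ and the $N^{-2}$-order control of the bias term are the analytic core of \cite{BAI04}; in your write-up they are asserted rather than carried out, so as it stands this part is a plan rather than a proof.
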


Now we apply this proposition to show the finite-dimensional convergence. For all $z_i \in \mathcal C_k \backslash \mathbb{R}$, notice that
$$m_{\hat{\bf R}_N}(z)- m_N(z) = \frac{1}{2 i \pi} \oint \frac{1}{x-z} d(F^{\hat{\bf R}_N}-F_N)(x) $$ with the contour who contains the support $\mathcal S$ and $X_N(z)= M(m_{\underline{\hat{\bf R}}_N}(z)-\underline{m}_N(z))$. Then Proposition \ref{prop:baisil} implies directly that for all $p \in \mathbb{N}$, the random vector $$\Big{(}X_N(z_1) , X_N^{'}(z_1),\cdots, X_N(z_p) , X_N^{'}(z_p)\Big{)}$$ converges to a centered Gaussian vector by considering the functions:
$$\left( f_1(x)=\frac{1}{x-z_1}, f_2(x)=\frac{1}{(x-z_1)^2},\cdots,f_{2p-1}(x)=\frac{1}{x-z_p}, f_{2p}(x)=\frac{1}{(x-z_p)^2} \right).$$ Thus the finite dimensional convergence is achieved.

The proof of the tightness is based on Nash-Poincar\'e inequality (\cite{LOU10} and \cite{HAC06}). In Appendix \ref{app:eigen-outside-bulk}, it is proved that for all $\epsilon >0$ and all $\ell \in \mathbb N$,
$$\mathbb{P}\left( \sup_{\lambda \in \mathrm{eig}(\hat{\bf R}_N) } d(\lambda, \mathcal S)>\epsilon \right) = o(N^{-\ell}).$$
Following the same idea as Bai and Silverstein \cite[Section 3 and 4]{BAI04}, it is indeed a tight sequence. The details of the proof are in Appendix \ref{app:tightness}. Thus Lemma \ref{lemma:convergence-approx} is achieved.

\subsection{Proof of the tightness}
\label{app:tightness}

We will show the tightness of the sequence $M( m_{\underline{\hat{\bf R}}_N}-\underline{m}_N) $ and $M( m_{\underline{\hat{\bf R}}_N}^{'}-\underline{m}^{'}_N) $ by using Nash-Poincar\'e's inequality \cite{HAC06}.
First, denote by $M(m_{\underline{\hat{\bf R}}_N}(z) - \underline{m}_{N}(z))= M_N^{1}(z)+M_N^{2}(z)$ with $M_N^{1}(z)= M(m_{\underline{\hat{\bf R}}_N}(z) - \mathbb{E}[m_{\underline{\hat{\bf R}}_N}(z)])$ and $M_N^2(z) = M( \mathbb{E}[m_{\underline{\hat{\bf R}}_N}(z)] - \underline{m}_{N}(z)).$

As $\frac{1}{\hat{\rho}_k-z}$ can converge to infinite if z is close to the real axis, there will be a little trouble for the tightness. Then we need a truncated version of the process. More precisely, let $\varepsilon_N$ be a real sequence decreasing to zero satisfying for some $\delta \in ]0,1[$:
$$\varepsilon_ N \geq N^{-\delta}\ .$$

\begin{remark}
Notice that $X_N(z) = M( m_{\underline{\hat{\bf R}}_N}-\underline{m}_N) =\overline{X_N(\overline{z})}$ for $z\in \overline{\mathbb C^{+}}$. So it suffices to verify the arguments for $z \in \mathbb C^{+}$.
\end{remark}

Denote by $([x_{2k-1},x_{2k}], k=1,\cdots,L)$ the $k$-th cluster of the support of the
limiting spectral measure; and take $l_{2k-1}, l_{2k}$ such that $x_{2k-2} < l_{2k-1} < x_{2k-1}$ and $x_{2k} < l_{2k}< x_{2k+1} $ for $k \in \{1,..,L\}$ with conventions $x_0=0$ and $x_{2L+1}=\infty$, $\it{i.e.}$ , $[l_{2k-1}, l_{2k}]$ only contains the k-th cluster. Let $d>0$. Consider:
$$C_u=\{x+i d: x \in [l_{2k-1}, l_{2k}]\}.$$
and
$$C_{r}= \{l_{2k-1}+\mathbf{i}v: v\in [N^{-1}\varepsilon_N, d]\}.$$
Also $$C_l=\{l_{2k}+\mathbf{i}v: v\in [N^{-1}\varepsilon_N, d]\}.$$

\noindent Then $C_N= C_l \cup C_u \cup C_r.$ The process $\hat{M}_N^{1}(\cdot)$ is defined by
$$\hat{M}_N^{1}(z)=\begin{cases}
M_N^{1}(z) & \text{for $z\in C_N$,}\\
M_N^{1}(l_{2k}+ \mathbf{i} N^{-1}\varepsilon_N) & \text{ for $x=l_{2k}, v \in [0, N^{-1}\varepsilon_N]$,}\\
M_N^{1}(l_{2k-1}+ \mathbf{i} N^{-1}\varepsilon_N) & \text{ for $x=l_{2k-1}, v \in [0, N^{-1}\varepsilon_N]$.}\\
\end{cases}$$

\noindent This partition of $C_N$ is identical to that used in \cite[Section 1]{BAI04}. With probability one (see \cite{SIL98} and \cite{BAI99}), for all $\epsilon >0$, $$\lim \sup_{\lambda \in \mathrm{eig}(\hat{\bf R}_N) } d(\lambda, \mathcal{S}_N) < \epsilon$$
with $d(x,S)$ the Euclidean distance of $x$ to the set $S$.
So with probability one, for all $N$ large, (\cite[page 563]{BAI04})
$$\left| \oint \Big{(}M_N^{1}(z)-\hat{M}_N^{1}(z)\Big{)}dz \right| \leq K_1 \varepsilon_N,$$
and
$$\left| \oint \Big{(} {M_N^{1}}'(z)-{\hat{M}_N^{2'}}(z)  \Big{)} dz \right| \leq K_2 \varepsilon_N$$
for some constants $K_1$ and $K_2$. Both terms converge to zero as $M\to \infty$. Then it suffices to ensure the tightness for $\hat{M}_N^{1}(z)$ and ${\hat{M}_N^{1'}}(z)$.

\noindent We now prove tightness based on \cite[Theorem 13.1]{BIL08}, i.e.
\begin{enumerate}
\begin{item}
Tightness at any point of the contour (here $C_N$).
\end{item}
\begin{item}
	Satisfaction of the condition
 $$\sup_{N, z_1,z_2 \in C_N} \frac{\mathbb{E}|(\hat{M}_N^{1}(z_1)- \hat{M}_N^{1}(z_2))|^2}{|z_1-z_2|^2} \leq K.$$
\end{item}
\end{enumerate}
Condition 1) is achieved by an immediate application of Proposition \ref{prop:baisil}. We now verify the second condition.

\vspace{4mm}
We evaluate $\frac{\mathbb{E}|(\hat{M}_N^{1}(z_1)- \hat{M}_N^{1}(z_2))|^2} {|z_1-z_2|^2}$.
Notice that
$$\begin{aligned} m_{\underline{\hat{\bf R}}_N}(z_1)-m_{\underline{\hat{\bf R}}_N}(z_2)& =\frac{z_1-z_2}{M}\sum_{i=1}^{N} \frac{1}{(\hat{\lambda}_i-z_1)(\hat{\lambda}_i-z_2)}\\ &=\frac{z_1-z_2}{M} \mathrm{Tr}({\bf D}_N^{-1}(z_1) {\bf D}_N^{-1}(z_2))
\end{aligned}$$
with ${\bf D}_N(z)= \hat{{\bf R}}_N- z {\bf I}_N$.
\noindent We have
$$ \begin{aligned} &\frac{\partial}{\partial Y_{i,j}} \left( \frac{m_{\underline{\hat{\bf R}}_N}(z_1)-m_{\underline{\hat{\bf R}}_N}(z_2)}{z_1-z_2} \right) \\ &= \frac{\partial}{\partial Y_{i,j}} \mathrm{Tr} (\hat{\bf R}_N-z_1 I)^{-1}(\hat{\bf R}_N-z_2 I)^{-1}\\
&= \frac{1}{M}\left[-{\bf Y}_N^{*} {\bf D}_N^{-2}(z_1) {\bf D}_N^{-1}(z_2) - {\bf Y}_N^{*} {\bf D}_N^{-1}(z_1) {\bf D}_N^{-2}(z_2) \right]_{j,i},\\
\end{aligned}$$
and

$$ \begin{aligned} & \frac{\partial}{\partial \bar{Y}_{i,j}} \left( \frac{m_{\underline{\hat{\bf R}}_N}(z_1)-m_{\underline{\hat{\bf R}}_N}(z_2)}{z_1-z_2} \right) \\ &= \frac{1}{M}[-{\bf D}_N^{-2}(z_1) {\bf D}_N^{-1}(z_2) {\bf Y}_N -{\bf D}_N^{-1}(z_1) {\bf D}_N^{-2}(z_2) {\bf Y}_N]_{i,j}. \end{aligned}$$
Then by the Nash-Poincar\'e inequality and the fact that $\hat{\bf R}_N$ is uniformly bounded in spectral norm almost surely, one gets
$$\begin{aligned} & \frac{\mathbb{E}|\hat{M}_1(z_1)- \hat{M}_1(z_2)|^2}{|z_1-z_2|^2}  \leq \frac{C_1}{N} \mathbb{E}\Big{[}\mathrm{Tr}({\bf L}_N) \Big{]} \\ & = \frac{C_1}{N} \mathbb{E} (\mathrm{Tr}({\bf L}_N) \mathrm{I}_{\sup_n d(\hat{\lambda}_n, \mathcal S)\leq \varepsilon}) + \frac{C_1}{N} \mathbb{E} (\mathrm{Tr}({\bf L}_N)\mathrm{I}_{\sup_n d(\hat{\lambda}_n, \mathcal S)>\varepsilon})
\end{aligned}$$
with $${\bf L}_N=\hat{\bf R}_N {\bf D}_N^{-4}(z_1) {\bf D}_N^{-2}(z_2) + 2 \hat{\bf R}_N {\bf D}_N^{-3}(z_1) {\bf D}_N^{-3}(z_2)+ \hat{\bf R}_N {\bf D}_N^{-2}(z_1) {\bf D}_N^{-4}(z_2)$$ and $C_1$ a constant which does not depend on $N$ or $M$. For the first term, $\mathrm{Tr}({\bf L}_N)$ is bounded on the set $\sup_n d(\hat{\lambda}_n, \mathcal S)\leq \varepsilon$. For the second term, since for all $i \in \mathbb N$ and all $z \in C_N$, $\frac{1}{|\hat{\lambda}_n- z|^i} \leq \frac{N^i}{\varepsilon_N^i}$ , it leads that $$\sum_{n=1}^N \frac{1}{|\hat{\lambda}_n- z|^i} \leq \frac{N^{i+1}}{\varepsilon_N^i}.$$ Then
$$|\mathrm{Tr}({\bf L}_N)| \leq \OO \left( \frac{ N^{7}}{\varepsilon_N^6} \right).$$ As $\mathbb{P}(\sup d(\hat{\lambda}_n, \mathcal S) \geq \varepsilon)=o(N^{-16})$, take $\varepsilon_N=N^{-0.01}$, one obtains $$\begin{aligned} \left| \mathbb{E} (\mathrm{Tr}({\bf L}_N)\mathrm{I}_{\sup_n d(\hat{\lambda}_n, S)>\varepsilon}) \right| & \leq  \mathbb{E} \left| \mathrm{Tr}({\bf L}_N)I_{\sup d(\hat{\lambda}_n, S)>\varepsilon} \right| \\  & \leq  \OO \left( \frac{N^7}{\epsilon_N^6} \mathbb{P} (\sup d(\hat{\lambda}_n, \mathcal S)>\varepsilon ) \right) \\ & \leq \OO \left( N^{7-0.06 -16} \right) \to 0. \\
\end{aligned}$$
The second condition of tightness is achieved.

For $M_N^{2}(z)$, following exactly the same method in \cite[Section 9.11]{SIL06}, one can show that $M_N^{2}(z)$ is bounded and forms an equicontinuous family that converges to $0$. Hence the tightness for $M(m_{\underline{\hat{\bf R}}_N}(z) -\underline{m}_N(z))$.

\vspace{4mm}
The next step is to prove the tightness of $M(m_{\underline{\hat{\bf R}}_M}'(z) -\underline{m}_N'(z)).$ We have $$ \begin{aligned} & m_{\underline{\hat{\bf R}}_N}'(z_1)-m_{\underline{\hat{\bf R}}_N}'(z_2) \\ &=\frac{z_1-z_2}{M}\sum_{i=1}^{N} \frac{2\hat{\lambda}_i-z_1-z_2}{(\hat{\lambda_i}-z_1)^2(\hat{\lambda}_i-z_2)^2} \\ &=\frac{z_1-z_2}{M} \mathrm{Tr}\left({\bf D}_N^{-2}(z_1) {\bf D}_N^{-2}(z_2)({\bf D}_N(z_1)+{\bf D}_N(z_2))\right).
\end{aligned}$$

Following the same method as derived before, one obtains
$$\begin{aligned} &\frac{\partial}{\partial Y_{ij}} {\bf D}_N^{-1}(z_1) {\bf D}_N^{-2}(z_2) \\ &= -\frac{1}{M} \left[ {\bf Y}_N^{*} {\bf D}_N^{-2}(z_1) {\bf D}_N^{-2}(z_2) + 2 {\bf Y}_N^{*} {\bf D}_N^{-1}(z_1){\bf D}_N^{-3}(z_2) \right]_{j,i}, \end{aligned}$$
and
$$ \left| \frac{\partial}{\partial Y_{ij}}\mathrm{Tr} {\bf D}_N^{-2}(z_1) {\bf D}_N^{-2}(z_2)({\bf D}(z_1)+{\bf D}_N(z_2)) \right|^2 = \frac{1}{M}\bf \mathrm{Tr} ({\bf L}_2)$$
with

$$\begin{aligned} {\bf L}_2=& 4 \hat{\bf R}_N\Big{(}3 {\bf D}_N^{-4}(z_1) {\bf D}_N^{-4}(z_2)+ 2{\bf D}_N^{-3}(z_1) {\bf D}_N^{-5}(z_2) +2{\bf D}_N^{-5}(z_1) {\bf D}_N^{-3}(z_2) \\ &+ {\bf D}_N^{-2}(z_1) {\bf D}_N^{-6}(z_2)+ {\bf D}_N^{-6}(z_1) {\bf D}_N^{-2}(z_2) \Big{)}.\\
\end{aligned}$$
Then Nash-Poincar\'e inequality yields that
$$\begin{aligned} &\textrm{Var}\frac{|{\hat{M}_N^{1'}}(z_1)- {\hat{M}_N^{1'}}(z_2)|}{|z_1-z_2|} \\ & \leq \frac{C_1}{N} \mathbb{E} (\mathrm{Tr}({\bf L}_2) \mathrm{I}_{\sup_n d(\hat{\lambda}_n, \mathcal S)\leq \varepsilon}) + \frac{C_1}{N} \mathbb{E} (\mathrm{Tr}({\bf L}_2) \mathrm{I}_{\sup_n d(\hat{\lambda}_n, \mathcal S)>\varepsilon}) \end{aligned}$$ with $C_1$ the same constant defined as before. The term $\mathrm{Tr}({\bf L}_2)$ is bounded on the set $\sup d(\hat{\lambda}_n,\mathcal S)\leq \varepsilon$. For the second term, $|\mathrm{Tr}({\bf L_2})| \leq \OO \left( \frac{ N^{9}}{\varepsilon_N^{8}} \right)$. As $\mathbb{P}(\sup d(\hat{\lambda}_n, \mathcal S) \geq \varepsilon)=o(N^{-16})$ and $\varepsilon_N=N^{-0.01}$, the proof of the tightness of ${M_N^{1}}'(z)$ is achieved as before.

The proof of the tightness is completed with the verification of ${M_N^{2}}'(z)$ for $z\in \mathcal{C}_n$ to be bounded and forms an equicontinuous family, and convergence to 0.
We will use the same method for the process ${M}_N^{2}(z)$ (see \cite[Section 9.11]{SIL06}).

By Formula (9.11.1) in \cite[Section 9.11]{SIL06}, they show that
\begin{equation}\label{eq:formule}
(\mathbb{E}m_{\underline{\hat{\bf R}}_N} -\underline{m}_N) \left(1- \frac{\frac{N}{M}\int \frac{\underline{m}_N t^2 dF^{\bf{R}_N}(t)}{(1+t\mathbb{E}m_{\underline{\hat{\bf{R}}}_N})(1+t \underline{m}_N)}}{-z+\frac{N}{M}\int \frac{t dF^{\bf{R}_N}}{1+t \mathbb{E}m_{\underline{\hat{\bf{R}}}_N}}- T_N } \right) =\mathbb{E}m_{\underline{\hat{\bf{R}}}_N} \underline{m}_N T_N
\end{equation}
where
$$\begin{aligned} &T_N=\frac{N}{M^2} \sum_{j=1}^M \mathbb{E}\beta_j d_j (\mathbb{E}m_{\underline{\hat{\bf{R}}}_N})^{-1},\\
&d_j=d_j(z)=-{\bf q}_j^{*}{\bf R}^{1/2}(\hat{\bf R}_{(j)}-z{\bf{I}} )^{-1}(\mathbb{E}m_{\underline{\hat{\bf{R}}}_N} {\bf R}+ {\bf{I}})^{-1} {\bf R}^{1/2}{\bf q}_j +(1/M)\mathrm{Tr} (\mathbb{E}m_{\underline{\hat{\bf{R}}}_N} {\bf R} + {\bf{I}})^{-1} {\bf R} (\hat{\bf R}_N-z {\bf{I}})^{-1},\\
&\beta_j=\frac{1}{1+ \frac{1}{M}{\bf y}_j^{*}(\hat{\bf R}_{(j)}-z {\bf{I}})^{-1}{\bf y}_j},\\
&{\bf q}_j=1/\sqrt{N}{\bf x}_j,\\
&\hat{\bf R}_{(j)}= \hat{\bf R}_N- \frac{1}{M}{\bf y}_j {\bf y}_j^{*}.\\
\end{aligned}$$
If one derives (\ref{eq:formule}) with respect to $z$, the equation becomes

$$\begin{aligned} &(\mathbb{E}m'_{\underline{\hat{\bf R}}_N} -\underline{m}'_N) \left(1- \frac{\frac{N}{M}\int \frac{\underline{m}_N t^2 dF^{\bf{R}_N}(t)}{(1+t\mathbb{E}m_{\underline{\hat{\bf{R}}}_N})(1+t \underline{m}_N)}}{-z+\frac{N}{M}\int \frac{t dF^{\bf{R}_N}}{1+t \mathbb{E}m_{\underline{\hat{\bf{R}}}_N}}- T_N} \right) + (\mathbb{E}m_{\underline{\hat{\bf R}}_N} -\underline{m}_N) \left(1- \frac{\frac{N}{M}\int \frac{\underline{m}_N t^2 dF^{\bf{R}_N}(t)}{(1+t\mathbb{E}m_{\underline{\hat{\bf{R}}}_N})(1+t \underline{m}_N)}}{-z+\frac{N}{M}\int \frac{t dF^{\bf{R}_N}}{1+t \mathbb{E}m_{\underline{\hat{\bf{R}}}_N}}-T_N } \right)' \\ &=\mathbb{E}m_{\underline{\hat{\bf{R}}}_N}' \underline{m}_N T_N +\mathbb{E}m_{\underline{\hat{\bf{R}}}_N} \underline{m}_N' T_N +\mathbb{E}m_{\underline{\hat{\bf{R}}}_N} \underline{m}_N T_N'.\\ \end{aligned}$$
In the work of \cite[Section 9.11]{SIL06}, they show that when $N$ tends to infinity,

\begin{enumerate}
\begin{item}
$\sup_{z \in \mathcal{C}_N}|\mathbb{E} m_{\underline{\hat{\bf{R}}}_N}(z)- \underline{m}(z)|\rightarrow 0$ and $\sup_{z\in \mathcal{C}_N}|\underline{m}_N(z)-\underline{m}(z)| \rightarrow 0,$
\end{item}

\begin{item}
$\frac{\frac{N}{M}\int \frac{t^2 \underline{m}_N dF^{\bf{R}_N}(t)}{(1+t\mathbb{E}m_{\underline{\hat{\bf{R}}}_N})(1+t \underline{m}_N)}}{-z+\frac{N}{M}\int \frac{t dF^{\bf{R}_N}}{1+t \mathbb{E}m_{\underline{\hat{\bf{R}}}_N}}-T_N }$ converges ,
\end{item}

\begin{item}
$M_N^{2}(z) \rightarrow 0, \quad T_N \rightarrow 0.$
\end{item}

With the same method, one can show easily that

\begin{item}
$\sup_{z \in \mathcal{C}_N}|\mathbb{E} m_{\underline{\hat{\bf{R}}}_N}'(z)- \underline{m}'(z)|\rightarrow 0, $
\end{item}

\begin{item}
$\sup_{z\in \mathcal{C}_N}|\underline{m}_N'(z)-\underline{m}'(z)| \rightarrow 0,$
\end{item}

\begin{item}
$\frac{N}{M} \left( \sum_{j=1}^M \mathbb{E}\beta_j d_j \right)'$ converges.
\end{item}
\end{enumerate}
With these results, it suffices to show that $T_N' \rightarrow 0,$ and ${M_N^2}'$ is equicontinuous.

In \cite[Section 9.9]{SIL06}, they show that for $m,p \in \mathbb{N}$ and a non-random $N\times N$ matrix ${\bf A}_k$, $k=1,..,m$ and $B_l$, $\ell=1,..,q$, we have

\begin{equation} \label{eq:important1}
\left| \mathbb{E}\left( \prod_{k=1}^{m}{\bf r}_t^{*}{\bf A}_k \bf{r}_t \prod_{\ell=1}^{q}(\bf{r}_t^{*} {\bf R}_{\ell} {\bf r}_t - M^{-1}\mathrm{Tr} {\bf R} {\bf B}_{\ell} ) \right) \right| \leq K M^{-(1\wedge q)} \prod_{k=1}^m \|{\bf A}_k\| \prod_{\ell=1}^q \|{\bf B}_{\ell}\|.
\end{equation}

We have also that for any positive $p$,
\begin{equation} \label{eq:important2}
\max(\mathbb{E}\|{\bf D}^{-1}(z)\|^p, \mathbb{E}\|{\bf D}_j^{-1}(z)\|^p,\mathbb{E}\| {\bf D}_{ij}^{-1}(z)\|^p) \leq K_p
\end{equation}
and
\begin{equation} \label{eq:important3}
\sup_{n,z\in \mathcal{C}_n}\|(\mathbb{E}m_{\underline{\hat{\bf{R}}}_N}(z) {\bf R}+ {\bf I})^{-1} \| < \infty
\end{equation}
where $K_p$ is a constant which depends only on $p$.

With all these preliminaries, as $T_N \rightarrow 0$, by the dominated convergence theorem of derivation, it suffices to show that $T_N'$ is bounded over $\mathcal{C}_N$.
In \cite[Section 9.11]{SIL06}, it is sufficient to show that $(f_M'(z))$ is bounded where

$$f_M(z)=\sum_{j=1}^M \mathbb{E}[({\bf r}_j^{*} {\bf D}_j^{-1} {\bf r}_j - M^{-1}\mathrm{Tr} {\bf D}_j^{-1} {\bf R} )( {\bf r}_j^{*} {\bf D}_j^{-1}(\mathbb{E}m_{\underline{\hat{\bf{R}}}_N} {\bf R}+{\bf I})^{-1} {\bf r}_j-M^{-1}\mathrm{Tr} {\bf D}_j^{-1} (\mathbb{E}m_{\underline{\hat{\bf{R}}}_N} {\bf R}+ {\bf I})^{-1} \bf{R} )].$$
With the help of (\ref{eq:important1})-(\ref{eq:important3}), $f_M'(z)$ is indeed bounded in ${\mathcal C}_N$.

Now we will show that ${M_N^2}'$ is equicontinuous. With the light work as before, it is sufficient to show that $f_M''(z)$ is bounded.
Using (\ref{eq:important1}), we obtain

$$\begin{aligned}
|f''(z)|\leq & KM^{-1} \Big{[} \Big( \mathbb{E}(\mathrm{Tr}{\bf D}_1^{-3} {\bf R} \bar{\bf D}_1^{-3} {\bf R} )\mathbb{E}(\mathrm{Tr}{\bf D}_1^{-1} (\mathbb{E}m_{\underline{\underline{\bf{R}}}_N} {\bf R}+ {\bf I})^{-1}{\bf R}(\mathbb{E}\bar{m}_{\underline{\hat{\bf{R}}}_N} {\bf R}+ {\bf I})^{-1}\bar{\bf D}_1^{-1} {\bf R} ) \Big)^{1/2} \\ &+2 \Big( \mathbb{E}(\mathrm{Tr} {\bf D}_1^{-2} {\bf R} \bar{\bf D}_1^{-2} {\bf R} ) \mathbb{E}(\mathrm{Tr} {\bf D}_1^{-2} (\mathbb{E}m_{\underline{\hat{\bf{R}}}_N} {\bf R}+ {\bf I})^{-1} {\bf R}(\mathbb{E}\bar{m}_{\underline{\hat{\bf{R}}}_N} {\bf R}+ {\bf I})^{-1}\bar{\bf D}_1^{-2} {\bf R} ) \Big)^{1/2}\\ & +2 |\mathbb{E}m_{\underline{\hat{\bf{R}}}_N}'|\Big( \mathbb{E}(\mathrm{Tr} {\bf D}_1^{-2} {\bf R} \bar{\bf D}_1^{-2} {\bf R} )\mathbb{E}(\mathrm{Tr} {\bf D}_1^{-1} (\mathbb{E}m_{\underline{\hat{\bf{R}}}_N} {\bf R}+ {\bf I})^{-2} {\bf R}(\mathbb{E}\bar{m}_{\underline{\hat{\bf{R}}}_N} \bf{R}+ {\bf I})^{-2}\bar{\bf D}_1^{-1}{\bf R} ) \Big)^{1/2}\\ & + \Big( \mathbb{E}(\mathrm{Tr} {\bf D}_1^{-1} {\bf R} \bar{\bf D}_1^{-1} {\bf R} )\mathbb{E}(\mathrm{Tr} {\bf D}_1^{-3} (\mathbb{E}m_{\underline{\hat{\bf{R}}}_N} {\bf R}+ {\bf I})^{-1} {\bf R}(\mathbb{E}\bar{m}_{\underline{\hat{\bf{R}}}_N} {\bf R}+ {\bf I})^{-1}\bar{\bf D}_1^{-3} {\bf R} ) \Big)^{1/2}\\  &+ 2 |\mathbb{E}m_{\underline{\hat{\bf{R}}}_N}'|\Big( \mathbb{E}(\mathrm{Tr} {\bf D}_1^{-1} {\bf R} \bar{\bf D}_1^{-1} {\bf R} )\mathbb{E}(\mathrm{Tr} {\bf D}_1^{-2} (\mathbb{E}m_{\underline{\hat{\bf{R}}}_N} {\bf R}+ {\bf I})^{-2} {\bf R}(\mathbb{E}\bar{m}_{\underline{\hat{\bf{R}}}_N} {\bf R}+ {\bf I})^{-2}\bar{\bf D}_1^{-2} {\bf R} ) \Big)^{1/2} \\ &+|\mathbb{E}m_{\underline{\hat{\bf{R}}}_N}''| \Big( \mathbb{E}(\mathrm{Tr} {\bf D}_1^{-1} {\bf R} \bar{\bf D}_1^{-1} {\bf R} )\mathbb{E}(\mathrm{Tr} {\bf D}_1^{-1} (\mathbb{E}m_{\underline{\hat{\bf{R}}}_N} {\bf R}+ {\bf I})^{-2} {\bf R} (\mathbb{E}\bar{m}_{\underline{\hat{\bf{R}}}_N} {\bf R}+ {\bf I})^{-2}\bar{\bf D}_1^{-1} {\bf R} ) \Big)^{1/2} \\ &+ |\mathbb{E}{m'}_{\underline{\hat{\bf{R}}}_N}|^2 \Big( \mathbb{E}(\mathrm{Tr} {\bf D}_1^{-1} {\bf R} \bar{\bf D}_1^{-1} {\bf R} )\mathbb{E}(\mathrm{Tr} {\bf D}_1^{-1} (\mathbb{E}m_{\underline{ \hat{\bf{R}}}_N} {\bf R}+ {\bf I})^{-3} {\bf R}(\mathbb{E}\bar{m}_{\underline{\hat{\bf{R}}}_N} {\bf R}+ {\bf I})^{-3}\bar{\bf D}_1^{-1} {\bf R} ) \Big)^{1/2} \Big{]}. \\
\end{aligned}$$

Thanks to (\ref{eq:important2}) and (\ref{eq:important3}), the right side is indeed bounded. This ends the proof of the tightness.

\vspace{10mm}

\end{appendix}

\bibliography{IEEEabrv,IEEEconf,tutorial_RMT}

\end{document}